\newcounter{propcounter}
\newtheorem{theorem}[propcounter]{Theorem}
\newtheorem{lemma}[propcounter]{Lemma}
\newtheorem{proposition}[propcounter]{Proposition}
\newtheorem{corollary}[propcounter]{Corollary}
\newcommand{\ric}{\mathop{\rm Ric}}
\newcommand{\spann}{\mathop{\rm Span}}
\newcommand{\id}{\mathop{\rm Id}}
\newcommand{\trace}{\mathop{\rm Tr}}
\renewcommand{\section}{\@startsection{section}{1}{\parindent}
{3.5ex plus 1ex minus .2ex}{2.3ex plus .2ex}{\normalsize
\bfseries}}
\begin{document}

\begin{center}
\Large The Gauss Map of Hypersurfaces in $2$-Step Nilpotent Lie
Groups
\end{center}

\vskip 1.5mm

\begin{center}
\large E. V. Petrov
\end{center}

\vskip 1.5mm

\begin{center}
\normalsize \itshape V. N. Karazin Kharkiv National University\\
4 Svobody sq., Kharkiv, 61077, Ukraine\\
E-mail: petrov@univer.kharkov.ua\\
\end{center}

\vskip 1.5mm

\begin{center}
\begin{minipage}[t]{130mm}
\small In this paper we consider smooth oriented hypersurfaces in
$2$-step nilpotent Lie groups with a left invariant metric and
derive an expression for the Laplacian of the Gauss map for such
hypersurfaces in the general case and in some particular cases. In
the case of CMC-hypersurface in the $2m+1$-dimensional Heisenberg
group we also derive necessary and sufficient condi\-ti\-ons for
the Gauss map to be harmonic and prove that for $m=1$ all
CMC-surfaces with the harmonic Gauss map are ''cylinders''.\\
{\itshape 2000 Mathematics Subject Classification.} Primary 53C40.
Secondary 53C42, 53C43, 22E25.\\
{\itshape Keywords.} 2-step nilpotent Lie group, Heisenberg group,
left invariant metric, Gauss map, harmonic map, minimal
submanifold, constant mean curvature.\\
\end{minipage}
\end{center}

It is proved in \cite{RV} that the Gauss map of a smooth
$n$-dimensional oriented hypersurface in $\mathbb{R}^{n+1}$ is
harmonic if and only if the hypersurface is of a constant mean
curvature (CMC). The same is proved for the cases of $S^3$, which
is a Lie group and thus has a natural definition of the Gauss map
\cite{M1}, and, in different settings, of $H^3$ \cite{M2}. A
generalization of this proposition to the case of Lie groups with
a bi-invariant metric (this class of Lie groups includes, for
example, abelian groups $\mathbb{R}^{n+1}$ and $S^3 \cong SU(2)$)
is proved in \cite{ES}. In this paper we use methods of \cite{ES}
for an investigation of the Gauss map of a hypersurface in some
$2$-step nilpotent Lie group with a left invariant metric. The
theory of such groups is highly developed (see, for example,
\cite{E1} and \cite{E2}).

The paper is organized as follows. After some preliminary
information (section \ref{ch1}), in section \ref{ch2-1} we obtain
an expression for the Laplacian of the Gauss map of a hypersurface
in a $2$-step nilpotent Lie group (Theorem \ref{th1}). Using this
expression we prove some facts concerning relations between
harmonic properties of the Gauss map and the mean curvature of the
hypersurface (see section \ref{ch2-2}), in particular, a
sufficient condition for the stability of CMC-hypersurfaces
(Proposition \ref{prop1-2}). In section \ref{ch3} we consider the
cases of Heisenberg type groups and Heisenberg groups. We show the
harmonicity of the Gauss map of a hypersurface in such groups is,
in general, not equivalent to the constancy of the mean curvature.
Also we obtain necessary and sufficient conditions for this
equivalence in the particular case of Heisenberg groups
(Proposition \ref{prop3}).

The author is grateful to prof. L. A. Masal'tsev for constant attention to this work. The author
would also thank prof. Yu. A. Nikolayevsky and prof. A. L. Yampolsky for many useful advices
concerning language and style.

\section{Preliminaries} \label{ch1}

Let us recall some basic definitions and facts about the stability of constant mean curvature
hypersurfaces in Riemannian manifolds. Suppose $M$ is a smooth $n$-dimensional manifold immersed in
a smooth $n+1$-dimensional Riemannian manifold as a CMC-hypersurface. Denote by $\eta$ a unit
normal vector field of $M$. Let $D\subset M$ be a compact domain. The {\itshape index form} of $D$
is a quadratic form $Q(\cdot,\cdot)$ on $C^{\infty}(D)$ defined by the equation
\begin{equation}\label{pr4}
Q(w,w)= -\int\limits_D wLw \, dV_M,
\end{equation}
where $dV_M$ is the volume form of the induced metric on $M$, $L$ is the {\itshape Jacobi operator}
$\Delta _M+ \left( \vphantom{2^{2^2}} Ric(\eta,\eta)+\|B\|^2 \right)$, $\ric(\cdot,\cdot)$ is the
Ricci tensor of the ambient manifold, $\|B\|$ is the norm of the second fundamental form of the
immersion, and $\Delta _M $ is the Laplacian of the induced metric (see, for example, \cite{CoM}).

Let $M$ be a minimal hypersurface (a hypersurface of a nonzero constant mean curvature,
respectively). A compact domain $D \subset M$ is called {\itshape stable} if $Q(w,w) \geqslant 0$
for every function $w \in C^{\infty}(D)$ vanishing on $\partial D$ (for every $w \in C^{\infty}(D)$
vanishing on $\partial D$ and with $\int\limits_D w \, dV_M=0$, respectively). The hypersurface $M$
is {\itshape stable} if every compact domain $D\subset M$ is stable, and is {\itshape unstable}
otherwise (see, for example, \cite{BdoCE}). It is proved in \cite[Theorem 1]{FiSc} that if the
{\itshape Jacobi equation} $Lw=0$ admits a solution $w$ strictly positive on $M$, then $M$ is
stable.

Let $(M,g)$ be a smooth Riemannian manifold. Denote by $\Delta_M$
the Laplacian of $g$. For each $\phi \in C^{\infty}(M,S^n)$ denote
by $\Delta_M \phi$ the vector $(\Delta_M \phi_1, \dots,$ $\Delta_M
\phi_{n+1})$, where $(\phi_1, \dots ,\phi_{n+1})$ is the
coordinate functions of $\phi$ for the standard embedding of a
unit sphere $S^n \hookrightarrow \mathbb{R}^{n+1}$. It is well
known that the harmonicity of $\phi$ is equivalent to the equation
$\Delta_M \phi =2e(\phi) \phi$, where $e(\phi)$ is the energy
density function of $\phi$ (see \cite[p.~140, Corollary
(2.24)]{Ur}).

Suppose $M$ is an oriented hypersurface in a $n+1$-dimensional Lie group $N$ with a left invariant
Riemannian metric. Fix the unit normal vector field $\eta$ of $M$ with respect to the orientation.
Let $p$ be a point of $M$. Denote by $L_a$ the left translation by $a \in N$, and let $dL_a$ be the
differential of this map. We can consider $p$ as an element of $N$ if we identify this point with
its image under the immersion. Let $G$ be the map of $M$ to $S^n \subset \mathcal{N}$ such that
$G(p)=\left(dL_p\right)^{-1}\left(\eta(p)\right)$ for all $p \in N$, where $\mathcal{N}$ is the Lie
algebra of $N$. We call $G$ the {\itshape Gauss map} of $M$. It is proved in \cite{ES} that if a
metric of $N$ is bi-invariant (see \cite{Mi} on a structure of such Lie groups), then the Gauss map
is harmonic if and only if the mean curvature of $M$ is constant.

Now we consider the case of nilpotent Lie groups. Let $\mathcal{N}$ be a finite dimensional Lie
algebra over $\mathbb{R}$ with a Lie bracket $[\cdot,\cdot]$. The lower central series of
$\mathcal{N}$ is defined inductively by $\mathcal{N}^1=\mathcal{N}$,
$\mathcal{N}^{k+1}=\left[\mathcal{N}^k,\mathcal{N}\right]$ for all positive integers $k$. The Lie
algebra $\mathcal{N}$ is called {\itshape $k$-step nilpotent} if $\mathcal{N}^k \neq 0$ and
$\mathcal{N}^{k+1}=0$. A Lie group $N$ is called {\itshape $k$-step nilpotent} if its Lie algebra
$\mathcal{N}$ is $k$-step nilpotent.

In the sequel, we consider a $2$-step nilpotent connected and
simply connected Lie group $N$ and its Lie algebra $\mathcal{N}$.
Let $\mathcal{Z}$ be the center of $\mathcal{N}$. Since
$\mathcal{N}$ is $2$-step nilpotent, $0 \neq
[\mathcal{N},\mathcal{N}] \subset \mathcal{Z}$. Suppose that
$\mathcal{N}$ is endowed with a scalar product $\langle \cdot ,
\cdot \rangle$. This scalar product induces a left invariant
Riemannian metric on $N$, which we also denote by $\langle \cdot ,
\cdot \rangle$. Let $\mathcal{V}$ be an orthogonal complement to
$\mathcal{Z}$ in $\mathcal{N}$ with respect to $\langle \cdot ,
\cdot \rangle$. Then
$[\mathcal{V},\mathcal{V}]=[\mathcal{N},\mathcal{N}] \subset
\mathcal{Z}$. For each $Z \in \mathcal{Z}$ a linear operator $J(Z)
\colon \mathcal{V} \rightarrow \mathcal{V}$ is well defined by
$\langle J(Z)X, Y \rangle = \langle [X,Y],Z \rangle$, where $X,Y
\in \mathcal{V}$ are arbitrary vectors.

An important class of $2$-step nilpotent groups consists of so-called $2m+1$-dimensional {\itshape
Heisenberg groups}, which appear in some problems of quantum and Hamiltonian mechanics \cite{Fo}.
The Lie algebra of a Heisenberg group has a basis $K_1, \dots, K_m$, $L_1, \dots, L_m$, $Z$ and the
structure relations
$$
[K_i,L_j]=\delta_{ij}Z, \, [K_i,K_j]=[L_i,L_j]=[K_i,Z]=[L_i,Z]=0,
\, 1\leqslant i,j \leqslant m,
$$
where $\delta_{ij}$ is the Kronecker symbol. We introduce a scalar product such that this basis is
orthonormal. The three-dimensional Heisenberg group with a left invariant Riemannian metric is
often denoted by $Nil$ and is a three-dimensional Thurston geometry. A Lie algebra $\mathcal{N}$ is
of {\itshape Heisenberg type} if $J(Z)^2=-\langle Z,Z \rangle \id|_{\mathcal{V}}$, for every $Z \in
\mathcal{Z}$ \cite{E2}. Its Lie group $N$ is called a Lie group of {\itshape Heisenberg type}. This
class of groups contains, for example, Heisenberg groups and quaternionic Heisenberg groups
\cite[p.~617]{E1}. A general approach to the structure of $2$-step nilpotent Lie algebras was
developed in the paper \cite{E3}.

The Riemannian connection associated with $\langle \cdot,\cdot
\rangle$ is defined on left invariant fields by (see \cite{E1})
\begin{equation}\label{pr1}
\begin{array}{lcll}
\nabla_{X}Y&=&\frac{1}{2}[X,Y], & X,Y \in \mathcal{V};\\
\nabla_{X}Z=\nabla_{Z}X&=&-\frac{1}{2}J(Z)X, & X \in \mathcal{V}, \, Z \in \mathcal{Z};\\
\nabla_{Z}Z^*&=&0, & Z,Z^* \in \mathcal{Z}.\\
\end{array}
\end{equation}
From this one can obtain for the curvature tensor
\begin{equation}\label{pr2}
\begin{array}{lcll}

\begin{array}{l}
R(X,Y)X^*\\
\\
\\
\end{array}
&
\begin{array}{c}
=\\
\\
\\
\end{array}
&
\begin{array}{l}
\frac{1}{2}J([X,Y])X^*\\
-\frac{1}{4}J([Y,X^*])X\\
+\frac{1}{4}J([X,X^*])Y,\\
\end{array}
& X,X^*,Y \in \mathcal{V};\\

\begin{array}{l}
R(X,Z)Y\\
R(X,Y)Z\\
\\
\end{array}
&
\begin{array}{c}
=\\
=\\
\\
\end{array}
&
\begin{array}{l}
-\frac{1}{4}[X,J(Z)Y],\\
-\frac{1}{4}[X,J(Z)Y]\\
+\frac{1}{4}[Y,J(Z)X],\\
\end{array}
& X,Y \in \mathcal{V}, \, Z \in \mathcal{Z};\\

\begin{array}{l}
R(X,Z)Z^*\\
R(Z,Z^*)X\\
\\
\end{array}
&
\begin{array}{c}
=\\
=\\
\\
\end{array}
&
\begin{array}{l}
-\frac{1}{4}J(Z)J(Z^*)X,\\
-\frac{1}{4}J(Z^*)J(Z)X\\
+\frac{1}{4}J(Z)J(Z^*)X,\\
\end{array}
& X \in \mathcal{V}, \, Z,Z^* \in \mathcal{Z};\\

R(Z,Z^*)Z^{**} &=& 0, & Z,Z^*,Z^{**} \in \mathcal{Z}.\\

\end{array}
\end{equation}
And the Ricci tensor is defined by
\begin{equation}\label{pr3}
\begin{array}{lcll}
\ric(X,Y)&=&\frac{1}{2}\sum\limits_{k=1}^{l}\langle J(Z_k)^2 X,Y
\rangle,
 & X,Y \in \mathcal{V};\\
\ric(X,Z)&=&0, & X \in \mathcal{V}, \, Z \in \mathcal{Z};\\
\ric(Z,Z^*)&=&-\frac{1}{4}\trace(J(Z)J(Z^*)), & Z,Z^* \in \mathcal{Z}.\\
\end{array}
\end{equation}
Here $\dim \mathcal{Z}=l$, and $Z_1, \dots, Z_l$ is an orthonormal
basis for $\mathcal{Z}$.

\section{The Laplacian of the Gauss map} \label{ch2-1}

Suppose $\dim N=\dim \mathcal{N}=n+1$, $\dim \mathcal{Z}=n-q+1$,
where $n$ and $q$ are positive integers, $q \leqslant n$.

Let $M$ be a smooth oriented manifold, $\dim M = n$. Suppose $M
\rightarrow N$ is an immersion of this manifold in $N$ as a
hypersurface, and $\eta$ is the unit normal vector field of $M$ in
$N$. For each point $p$ of $M$, suppose that
$\eta(p)=Y_{n+1}=X_{n+1}+Z_{n+1}$, where $X_{n+1} \in
\mathcal{V}$, $Z_{n+1} \in \mathcal{Z}$. Throughout this paper, we
denote by $X_i, \, Y_i, \, Z_i$ elements of $T_p N$ as well as the
corresponding left invariant vector fields, which are elements of
$\mathcal{N}$. Choose an orthonormal frame $\{Y_1, \dots , Y_n\}$
in the vector space $T_p M \subset T_p N$ such that for
$1\leqslant i \leqslant q-1$ $Y_i=X_i$, $Y_q=X_q-Z_q$, and for
$q+1\leqslant i \leqslant n$ $Y_i=Z_i$, where $X_1, \dots, X_q$
are elements of $\mathcal{V}$, $Z_q, \dots, Z_n$ belong to
$\mathcal{Z}$, $X_{n+1}=\lambda X_q$, $Z_{n+1}=\mu Z_q$, where
$\lambda \geqslant 0$ and $\mu \geqslant 0$, $\left| X_q
\right|=\left| Z_{n+1} \right|$, $\left| Z_{q} \right|=\left|
X_{n+1} \right|$. Let $E_1, \ldots E_n$ be an orthonormal frame
defined on some neighborhood $U$ of $p$ such that $E_i(p) = Y_i$
and $\left(\nabla _{E_i}E_j\right)^T(p)=0$, for all $i, j = 1,
\ldots n$ (such a frame is called geodesic at $p$). Here we denote
by $(\cdot)^T$ the projection to $T_p M$.

We can rewrite \eqref{pr3} in the following form
\begin{equation}\label{eq1-ab0}
\begin{array}{lcll}
\ric(X,Y)&=&\frac{1}{2}\sum\limits_{k=q}^{n+1}\langle J(Z_k)^2 X,Y
\rangle,
 & X,Y \in \mathcal{V};\\
\ric(X,Z)&=&0, & X \in \mathcal{V}, \, Z \in \mathcal{Z};\\
\ric(Z,Z^*)&=&-\frac{1}{4}\sum\limits_{1 \leqslant k \leqslant q,
\, k=n+1}
\langle J(Z)J(Z^*)X_k,X_k \rangle, & Z,Z^* \in \mathcal{Z}.\\
\end{array}
\end{equation}
In particular, for all $X,Y \in \mathcal{V}$
\begin{equation}\label{eq1-ab0-1}
\begin{array}{c}
\sum\limits_{1 \leqslant i \leqslant q, \, i=n+1} \langle
J([X,X_i])X_i,Y \rangle\\

=\sum\limits_{1 \leqslant i \leqslant q, \, i=n+1}
\sum\limits_{j=q}^{n+1} \langle [X,X_i],Z_j \rangle \langle
[X_i,Y],Z_j \rangle\\

= -\sum\limits_{j=q}^{n+1} \sum\limits_{1 \leqslant i \leqslant q,
\, i=n+1} \langle J(Z_j)X,X_i \rangle \langle J(Z_j)Y,X_i \rangle\\

=\sum\limits_{j=q}^{n+1}\langle J(Z_j)^2 X, Y \rangle=2\ric(X,Y).\\
\end{array}
\end{equation}

For $1 \leqslant i,j \leqslant n$, denote by $b_{ij}=\langle
\nabla_{E_i}E_j, \eta \rangle$ the coefficients of the second
fundamental form of the immersion, by $\|B\|$ the norm of this
form, and by $H$ the mean curvature of the immersion on $U$. Since
the frame is orthonormal over $U$,
\begin{equation}\label{eq1-ab2}
\begin{array}{c}
H=\frac{1}{n}\sum\limits_{i=1}^{n}b_{ii},\\

\|B\|^2 =\sum\limits_{1 \leqslant i,j \leqslant
n}\left(b_{ij}\right)^2.\\
\end{array}
\end{equation}
Suppose that on $U$
$$
\eta = \sum \limits_{j=1}^{n+1} a_j Y_j,
$$
where $\{a_j\}_{j=1}^{n+1}$ are some functions on $U$. It is clear
that $a_j(p)=\delta_{j \, n+1}$. Then the Gauss map $G \colon U
\rightarrow S^n \subset \mathbb{R}^{n+1}$ takes the form
$$
G = \sum \limits_{j=1}^{n+1} a_j Y_j(e).
$$
In particular, $G(p)=Y_{n+1}(e)$. Denote by $\Delta$ the Laplacian
$\Delta_M$ of the induced metric on $M$.

\begin{theorem}\label{th1}

Let $M$ be a smooth oriented manifold immersed in a 2-step
nilpotent Lie group $N$ as a hypersurface and $G$ be the Gauss map
of $M$. Then, in the above notation

\begin{equation}\label{eq1}
\begin{array}{c}
\Delta G(p) = \sum \limits_{k=1}^q\left( -Y_k(nH)+\sum
\limits_{j=1}^{q-1}\langle J([X_k,X_j])X_j, X_{n+1} \rangle
\right.\\

\vphantom{\sum\limits_{i=1}^{n}} +4\langle
R(X_k,Z_{n+1})Z_{n+1},X_{n+1}
 \rangle - 2\sum \limits_{i=1}^{q}
\sum \limits_{j=q+1}^{n}b_{ij}(p)\langle J(Z_j)X_i, X_k \rangle\\

\left. +2\sum \limits_{i=1}^{q}b_{iq}(p)\langle J(Z_q)X_i, X_k
\rangle+nH(p)\langle
J(Z_{n+1})X_{n+1}, X_k \rangle \right)Y_k(e) \\

+\sum \limits_{k=q+1}^n\left( \vphantom{\sum\limits_{i=1}^{n}} -Y_k(nH) \right)Y_k(e)\\

+\left(\sum \limits_{j=1}^{q-1}\langle J([X_{n+1},X_j])X_j,
X_{n+1} \rangle +4\langle R(X_{n+1},Z_{n+1})Z_{n+1},X_{n+1}  \rangle \right. \\

\vphantom{\sum\limits_{i=1}^{n}} -2\sum \limits_{i=1}^{q} \sum
\limits_{j=q+1}^{n}b_{ij}(p)\langle J(Z_j)X_i, X_{n+1}
\rangle+2\sum \limits_{i=1}^{q}b_{iq}(p)\langle J(Z_q)X_i, X_{n+1}
\rangle\\

\left. \vphantom{\sum\limits_{i=1}^{n}} -\|B\|^2(p)-\ric(Y_{n+1},Y_{n+1}) \right)Y_{n+1}(e). \\
\end{array}
\end{equation}

Here $Y_k(nH)$ denotes the derivative of the function $nH$ with
respect to the vector field $Y_k$.

\end{theorem}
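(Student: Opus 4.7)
The plan is to work coordinatewise in the fixed basis $\{Y_1(e),\ldots,Y_{n+1}(e)\}$ of $\mathcal{N}\cong\mathbb{R}^{n+1}$. Setting $a_j := \langle\eta,Y_j\rangle \in C^\infty(U)$ (so $a_{n+1}(p)=1$ and $a_j(p)=0$ for $j\leqslant n$), the Gauss map becomes $G=\sum_j a_j\,Y_j(e)$, and since each $Y_j(e)$ is a constant vector of $\mathbb{R}^{n+1}$, $\Delta G(p)=\sum_j \Delta a_j(p)\,Y_j(e)$. Because $\{E_i\}$ is geodesic at $p$, the intrinsic Laplacian reduces to $\Delta a_j(p)=\sum_{i=1}^{n} E_iE_i(a_j)(p)$, and the whole calculation amounts to expanding
\[
\sum_{i=1}^{n}E_iE_i\langle\eta,Y_j\rangle=\sum_i\bigl[\langle\nabla_{E_i}\nabla_{E_i}\eta,Y_j\rangle+2\langle\nabla_{E_i}\eta,\nabla_{E_i}Y_j\rangle+\langle\eta,\nabla_{E_i}\nabla_{E_i}Y_j\rangle\bigr](p)
\]
with $\nabla$ the ambient Levi-Civita connection and then evaluating at $p$.

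I would handle the three summands separately. The first is the purely extrinsic piece: Weingarten gives $\nabla_{E_i}\eta=-\sum_k b_{ik}E_k$, the geodesic property gives $(\nabla_{E_i}E_k)(p)=b_{ik}(p)Y_{n+1}$, and the Codazzi equation yields $\sum_i E_i(b_{ik})(p)=Y_k(nH)+\ric(Y_k,Y_{n+1})$; together these produce the $-Y_k(nH)$ entries of \eqref{eq1}, the $-\|B\|^2(p)$ contribution to the $Y_{n+1}(e)$ component, and residual $-\ric$ terms to be absorbed later. The second and third summands require covariant derivatives of the left-invariant fields $Y_j$. Because the metric is left-invariant, $\nabla Y_j$ is itself a left-invariant endomorphism of $TN$, so its pointwise value at $p$ is computable from \eqref{pr1}, and the second covariant derivative at $p$ reduces to an algebraic expression in \eqref{pr1}, \eqref{pr2} plus a term $b_{ii}(p)\,\nabla_{Y_{n+1}}Y_j(p)$ arising from $(\nabla_{E_i}E_i)(p)$. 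Pairing with $\nabla_{E_i}\eta$ and $\eta$ respectively then produces the shape-operator/$J$-operator cross terms $-2\sum_{i,j}b_{ij}(p)\langle J(Z_j)X_i,X_k\rangle+2\sum_i b_{iq}(p)\langle J(Z_q)X_i,X_k\rangle+nH(p)\langle J(Z_{n+1})X_{n+1},X_k\rangle$, the pure curvature terms $4\langle R(X_k,Z_{n+1})Z_{n+1},X_{n+1}\rangle$, and the bracket terms $\sum_{j=1}^{q-1}\langle J([X_k,X_j])X_j,X_{n+1}\rangle$, together with their $Y_{n+1}(e)$-component analogues.

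The final step is to sort all contributions by their target basis vector $Y_k(e)$, grouping them into the four blocks of \eqref{eq1} corresponding to $1\leqslant k\leqslant q-1$, $k=q$, $q+1\leqslant k\leqslant n$, and $k=n+1$. At this stage I would invoke identity \eqref{eq1-ab0-1} to rewrite residual sums of the form $\sum_i\langle J([X,X_i])X_i,Y\rangle$ as Ricci terms; these cancel the $\ric(Y_k,Y_{n+1})$ leftovers from the Weingarten piece in the tangent components and combine to yield the $-\ric(Y_{n+1},Y_{n+1})$ term in the normal component.

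The principal obstacle is the bookkeeping forced by the inhomogeneity of the frame $\{Y_j\}$ with respect to the splitting $\mathcal{N}=\mathcal{V}\oplus\mathcal{Z}$: because $Y_q=X_q-Z_q$ and $Y_{n+1}=X_{n+1}+Z_{n+1}$ mix the two summands, every $\nabla_{E_i}Y_j$ and $\nabla_{E_i}\nabla_{E_i}Y_j$ must be split into sub-cases according to the $\mathcal{V}$- or $\mathcal{Z}$-membership of $Y_i$ and of the components of $Y_j$, producing many intermediate $J$- and bracket-terms that only match the four blocks of \eqref{eq1} after careful application of the curvature formulas \eqref{pr2}, the symmetries of $R$, and identity \eqref{eq1-ab0-1}.
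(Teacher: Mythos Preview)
Your proposal is correct and follows essentially the same route as the paper: expand $\Delta a_k$ via the Leibniz rule into an extrinsic piece handled by Weingarten/Codazzi (giving $-Y_k(nH)$, $-\|B\|^2$, and a Ricci remainder) and two algebraic pieces governed by the left-invariant connection formulas \eqref{pr1}--\eqref{pr2}, then reorganize using \eqref{eq1-ab0-1}. The only cosmetic difference is that the paper rewrites the Leibniz identity so that the second-derivative term involves only $\nabla_{E_i}\nabla_{E_i}Y_{n+1}$ rather than $\nabla_{E_i}\nabla_{E_i}Y_k$ for each $k$ (compare \eqref{eq1-pr3}), and then isolates the resulting bookkeeping into two technical lemmas; your splitting is equivalent but forces you to compute second covariant derivatives of every $Y_k$, which is slightly less economical.
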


\begin{proof}
Since the frame $E_1, \dots , E_n$ is geodesic at $p$, the
Laplacian at this point has the form
\begin{equation}\label{eq1-ab1}
\Delta G(p) = \sum \limits_{j=1}^{n+1} \sum \limits_{i=1}^{n} E_i
E_i(a_j) Y_j(e).
\end{equation}

For $1 \leqslant i \leqslant n$ we have on $U$
\begin{equation}\label{eq1-pr1}
\nabla_{E_i}\eta=\sum\limits_{j=1}^{n+1}E_i(a_j)Y_j+\sum\limits_{j=1}^{n+1}a_j\nabla_{E_i}Y_j,
\end{equation}
\begin{equation}\label{eq1-pr2}
\nabla_{E_i}\nabla_{E_i}\eta=\sum\limits_{j=1}^{n+1}E_iE_i(a_j)Y_j+2\sum\limits_{j=1}^{n+1}E_i(a_j)\nabla_{E_i}Y_j+
\sum\limits_{j=1}^{n+1}a_j\nabla_{E_i}\nabla_{E_i}Y_j.
\end{equation}
Considering this expression at $p$ and taking its scalar product
with $Y_k$ for $1\leqslant k \leqslant n+1$, we get
$$
\langle \nabla_{E_i}\nabla_{E_i}\eta, Y_k
\rangle=E_iE_i(a_k)+2\sum\limits_{j=1}^{n+1}E_i(a_j)\langle\nabla_{E_i}Y_j,
Y_k \rangle+ \langle \nabla_{E_i}\nabla_{E_i}Y_{n+1}, Y_k \rangle.
$$
Then for scalar coefficients in \eqref{eq1-ab1} we have
\begin{equation}\label{eq1-pr3}
\begin{array}{c}
\sum \limits_{i=1}^{n} E_i E_i(a_k)=\sum\limits_{i=1}^{n}\langle
\nabla_{E_i}\nabla_{E_i}\eta, Y_k \rangle\\

-2\sum\limits_{j=1}^{n+1}\sum
\limits_{i=1}^{n}E_i(a_j)\langle\nabla_{E_i}Y_j, Y_k \rangle-\sum
\limits_{i=1}^{n}\langle \nabla_{E_i}\nabla_{E_i}Y_{n+1}, Y_k
\rangle.
\end{array}
\end{equation}

For $1\leqslant k \leqslant n$, the first expression in
\eqref{eq1-ab2} and the definition of a second fundamental form
imply at $p$
$$
Y_k(nH)=E_k\left(\sum\limits_{i=1}^{n}\langle\nabla_{E_i}E_i,
\eta\rangle\right)=\sum\limits_{i=1}^{n}\langle\nabla_{E_k}\nabla_{E_i}E_i,
\eta\rangle
$$
$$
=\sum\limits_{i=1}^{n} \left( \vphantom{\sum\limits_{i=1}^{n}}
\langle R(E_k,E_i)E_i,
\eta\rangle+\langle\nabla_{E_i}\nabla_{E_k}E_i,
\eta\rangle+\langle\nabla_{[E_k,E_i]}E_i, \eta\rangle \right)
$$
$$
=\sum\limits_{i=1}^{n} \left( \vphantom{\sum\limits_{i=1}^{n}}
\langle R(Y_k,Y_i)Y_i,
Y_{n+1}\rangle+\langle\nabla_{E_i}\nabla_{E_k}E_i, \eta\rangle
\right).
$$
The second equality in the equation above follows from the fact that the projection
$\left(\nabla_{E_i}E_i\right)^T=0$ at $p$ and the vector $\nabla_{E_k}\eta$ is tangent to $M$. The
fourth equality is a consequence of
$$
[E_k,E_i]=\left([E_k,E_i]\right)^{T}=\left(\nabla_{E_k}E_i-\nabla_{E_i}E_k\right)^{T}=0
$$
at $p$. Since $\langle[E_k,E_i],\eta\rangle=0$ on $U$, and
$[E_k,E_i](p)=0$, at $p$ we have
$$
0=\langle\nabla_{E_i}[E_k,E_i],\eta\rangle=\langle\nabla_{E_i}\nabla_{E_k}E_i-\nabla_{E_k}\nabla_{E_i}E_i,\eta\rangle,
$$
for $1\leqslant i \leqslant n$, hence
\begin{equation} \label{eq1-pr4}
Y_k(nH)=\sum\limits_{i=1}^{n} \left(
\vphantom{\sum\limits_{i=1}^{n}} \langle R(Y_k,Y_i)Y_i,
Y_{n+1}\rangle+\langle\nabla_{E_i}\nabla_{E_i}E_k, \eta\rangle
\right).
\end{equation}

Differentiating $\langle E_k, \eta\rangle=0$ two times with
respect to $E_i$ (here we put $1\leqslant k,i \leqslant n$) and
using $\langle \nabla_{E_i}E_k, \nabla_{E_i}\eta\rangle(p)=0$, we
derive from \eqref{eq1-pr4}
\begin{equation}\label{eq1-pr5}
\begin{array}{c}
\sum\limits_{i=1}^{n}\langle \nabla_{E_i}\nabla_{E_i}\eta, Y_k
\rangle= -\sum\limits_{i=1}^{n}\langle
\nabla_{E_i}\nabla_{E_i}E_k, \eta \rangle\\

=-Y_k(nH)+\sum\limits_{i=1}^{n}\langle R(Y_k,Y_i)Y_i,
Y_{n+1}\rangle=-Y_k(nH)+\ric(Y_k,Y_{n+1}).
\end{array}
\end{equation}
For $1 \leqslant i \leqslant n$, differentiating $\langle
\eta,\eta \rangle=1$ two times with respect to $E_i$, we get
$2\langle \nabla_{E_i}\nabla_{E_i}\eta,\eta \rangle+2\langle
\nabla_{E_i}\eta,\nabla_{E_i}\eta \rangle=0$. This equation and
the second expression in \eqref{eq1-ab2} imply at $p$
\begin{equation}\label{eq1-pr6}
\begin{array}{c}
\sum\limits_{i=1}^{n}\langle \nabla_{E_i}\nabla_{E_i}\eta, Y_{n+1}
\rangle= -\sum\limits_{i=1}^{n}\langle
\nabla_{E_i}\eta, \nabla_{E_i}\eta\rangle\\

=-\sum\limits_{i=1}^{n}\sum\limits_{j=1}^{n}\langle
\nabla_{E_i}\eta,E_j\rangle \langle \nabla_{E_i}\eta,E_j\rangle\\

\vphantom{\sum\limits_{i=1}^{n}} =-\sum\limits_{1 \leqslant i,j
\leqslant n}\langle \nabla_{E_i}E_j, \eta\rangle^2 = -\|B\|^2(p).
\end{array}
\end{equation}

Consider the scalar products $\langle \nabla_{E_i} \eta , Y_{k}
\rangle$ at the point $p$, for $1\leqslant i \leqslant n$,
$1\leqslant k \leqslant n+1$. As $|\eta| = |Y_{n+1}| = 1$, we
obtain from \eqref{eq1-pr1}
$$
0=\langle \nabla_{E_i}\eta, \eta \rangle(p)=\langle
\nabla_{E_i}\eta, Y_{n+1} \rangle=E_i(a_{n+1})+\langle
\nabla_{E_i}Y_{n+1}, Y_{n+1} \rangle=E_i(a_{n+1}).
$$
For $1\leqslant k \leqslant n$, $\langle E_k, \eta \rangle=0$ imply

$$
\begin{array}{c}
b_{ik}(p)=\langle \nabla_{E_i}E_k, \eta \rangle(p)=-\langle \nabla_{E_i}\eta, E_k
\rangle(p)=-\langle \nabla_{E_i}\eta, Y_k \rangle\\
=-E_i(a_k)-\langle \nabla_{E_i}Y_{n+1}, Y_k \rangle.\\
\end{array}
$$

Hence at $p$ we have
$$
-2\sum\limits_{j=1}^{n+1}\sum
\limits_{i=1}^{n}E_i(a_j)\langle\nabla_{E_i}Y_j, Y_k
\rangle
$$
$$
=2\sum\limits_{j=1}^{n}\sum \limits_{i=1}^{n}\left( \vphantom{\sum\limits_{i=1}^{n}}
 b_{ij}(p)+\langle \nabla_{Y_i}Y_{n+1}, Y_j
\rangle\right)\langle\nabla_{Y_i}Y_j, Y_k \rangle.
$$
It follows from \eqref{pr1} that for $1\leqslant i,j \leqslant q$
the expression $b_{ij}(p)\langle \nabla_{X_i}X_j, Y_k \rangle$ is
skew-symmetric with respect to $i,j$; hence the sum of such terms
with respect to $i$ and $j$ vanishes. Sum up other expressions
using the symmetry $\nabla_{X}Z=\nabla_{Z}X$ for all $X \in
\mathcal{V}$, $Z \in \mathcal{Z}$ and the symmetry of the second
fundamental form. We obtain
\begin{equation}\label{eq1-pr7}
\begin{array}{c}
-2\sum\limits_{j=1}^{n+1}\sum
\limits_{i=1}^{n}E_i(a_j)\langle\nabla_{E_i}Y_j, Y_k
\rangle=-2\sum\limits_{i=1}^{q}\sum \limits_{j=q+1}^{n}
b_{ij}(p)\langle J(Z_j)X_i, Y_k \rangle\\

+2\sum\limits_{i=1}^{q} b_{iq}(p)\langle J(Z_q)X_i, Y_k
\rangle+2\sum\limits_{1 \leqslant i,j \leqslant n} \langle
\nabla_{Y_i}Y_{n+1}, Y_j \rangle\langle\nabla_{Y_i}Y_j, Y_k
\rangle.\\
\end{array}
\end{equation}
Now we can complete the proof of the theorem using the following
technical lemmas.

\begin{lemma}\label{lem1}

The last summand on the right hand side of \eqref{eq1-pr7} is
equal to

\begin{equation}\label{eq1-pr8}
\begin{array}{c}
2\sum\limits_{1 \leqslant i,j \leqslant n}\langle
\nabla_{Y_i}Y_{n+1}, Y_j \rangle\langle\nabla_{Y_i}Y_j, Y_k
\rangle\\

=\left[
\begin{array}{ll}
2\ric(Y_k,Y_{n+1})+4\langle R(X_k,Z_{n+1})Z_{n+1},X_{n+1}\rangle,
& 1 \leqslant k \leqslant q-1;\\

\begin{array}{l}
2\ric(X_q,X_{n+1})+2\ric(Z_q,Z_{n+1})\\

+4\langle R(X_q,Z_{n+1})Z_{n+1},X_{n+1}\rangle\\

-4\langle R(X_{n+1},Z_q)Z_{n+1},X_{n+1}\rangle,\\
\end{array}

& k=q;\\

\begin{array}{l}
-2\ric(Y_k,Y_{n+1})\\

+4\langle R(X_{n+1},Z_k)Z_{n+1},X_{n+1}\rangle,\\
\end{array}

& q+1 \leqslant k \leqslant n;\\

\begin{array}{l}
2\ric(X_{n+1},X_{n+1})-2\ric(Z_{n+1},Z_{n+1})\\

+8\langle R(X_{n+1},Z_{n+1})Z_{n+1},X_{n+1}\rangle,\\
\end{array}

& k=n+1.\\
\end{array}
\right.\\
\end{array}
\end{equation}

\end{lemma}

\begin{lemma}\label{lem2}

The last summand on the right hand side of \eqref{eq1-pr3} can be
reduced to the form

\begin{equation}\label{eq1-pr15}
\begin{array}{c}
-\sum \limits_{i=1}^{n}\langle \nabla_{E_i}\nabla_{E_i}Y_{n+1},
Y_k \rangle\\

=\left[
\begin{array}{ll}
nH(p)\langle J(Z_{n+1})X_{n+1}, X_k \rangle-\ric(Y_k,Y_{n+1}),
& 1 \leqslant k \leqslant q-1;\\

\begin{array}{l}
-\ric(X_q,X_{n+1})-\ric(Z_q,Z_{n+1})\\

+4\langle R(X_{n+1},Z_q)Z_{n+1},X_{n+1} \rangle,\\
\end{array}

& k=q;\\

\ric(Y_k,Y_{n+1})-4\langle R(X_{n+1},Z_k)Z_{n+1},X_{n+1} \rangle,
& q+1 \leqslant k \leqslant n;\\

\begin{array}{l}
-\ric(X_{n+1},X_{n+1})+\ric(Z_{n+1},Z_{n+1})\\

-4\langle R(X_{n+1},Z_{n+1})Z_{n+1},X_{n+1} \rangle,\\
\end{array}

& k=n+1.\\
\end{array}
\right.\\
\end{array}
\end{equation}

\end{lemma}

Now, if we combine \eqref{eq1-pr3} with \eqref{eq1-pr7},
\eqref{eq1-pr8}, \eqref{eq1-pr15}, and \eqref{eq1-ab0-1}, we get
\eqref{eq1}.
\end{proof}

\begin{proof}[\rm{P r o o f \; o f \; L e m m a \, \ref{lem1}}]

For $1\leqslant k \leqslant q-1$ from the expressions for the
Riemannian connection we get
$$
\sum\limits_{i=1}^{n}\sum \limits_{j=1}^{n}\langle
\nabla_{Y_i}Y_{n+1}, Y_j \rangle\langle\nabla_{Y_i}Y_j, X_k
\rangle=\sum\limits_{i=1}^{q-1}\left(
\vphantom{\sum\limits_{i=1}^{n}} \langle \frac{1}{2}[X_i,X_{n+1}],
-Z_q \rangle\right.
$$
$$
+\left. \vphantom{\sum\limits_{i=1}^{n}} \langle
-\frac{1}{2}J(Z_{n+1})X_i, X_q \rangle\right)\langle
\frac{1}{2}J(Z_q)X_i, X_k \rangle
$$
$$
+\sum\limits_{i=1}^{q-1}\sum \limits_{j=q+1}^{n}\langle \frac{1}{2}[X_i,X_{n+1}], Z_j \rangle
\langle -\frac{1}{2}J(Z_j)X_i, X_k \rangle
$$
$$
+\sum \limits_{j=1}^{q-1}\langle -\frac{1}{2}J(Z_{n+1})X_q+\frac{1}{2}J(Z_{q})X_{n+1}, X_j
\rangle\langle \frac{1}{2}J(Z_q)X_j, X_k \rangle
$$
$$
+\left( \vphantom{\sum\limits_{i=1}^{n}} \langle
\frac{1}{2}J(Z_q)X_{n+1}-\frac{1}{2}J(Z_{n+1})X_q,
X_q\rangle+\langle \frac{1}{2}[X_q,X_{n+1}], -Z_q \rangle\right)
\langle J(Z_q)X_q, X_k \rangle
$$
$$
+\sum \limits_{j=q+1}^{n}\langle \frac{1}{2}[X_q,X_{n+1}], Z_j
\rangle\langle -\frac{1}{2} J(Z_j)X_q, X_k
\rangle
$$
$$
+\sum\limits_{i=q+1}^{n}\sum \limits_{j=1}^{q-1}\langle -\frac{1}{2}J(Z_i)X_{n+1}, X_j \rangle
\langle -\frac{1}{2}J(Z_i)X_j, X_k \rangle
$$
$$
+\sum\limits_{i=q+1}^{n}\langle -\frac{1}{2}J(Z_i)X_{n+1}, X_q \rangle\langle
-\frac{1}{2}J(Z_i)X_q, X_k \rangle.
$$
The skew-symmetry of $J$ implies $\langle J(Z)X_q,X_{n+1}\rangle=-\langle J(Z)X_{n+1},X_q\rangle=0$
for all $Z \in \mathcal{Z}$, and $[X_q,X_{n+1}]=0$. Hence we can rewrite the above expression in
the form
$$
\sum\limits_{i=1}^{n}\sum \limits_{j=1}^{n}\langle
\nabla_{Y_i}Y_{n+1}, Y_j \rangle\langle\nabla_{Y_i}Y_j, X_k
\rangle
$$
$$
=\frac{1}{2}\sum \limits_{j=q}^{n}\sum\limits_{1 \leqslant i
\leqslant q , \, i=n+1}\langle -J(Z_j)X_{n+1}, X_i \rangle \langle
J(Z_j)X_k, X_i \rangle
$$
$$
=-\frac{1}{2}\sum \limits_{j=q}^{n}\langle J(Z_j)X_{n+1},
J(Z_j)X_k \rangle=\frac{1}{2}\sum \limits_{j=q}^{n}\langle
J(Z_j)^2X_{n+1}, X_k \rangle
$$
$$
=\ric(X_k,X_{n+1})+2\langle R(X_k,Z_{n+1})Z_{n+1},X_{n+1}\rangle.
$$
This implies the first equality in \eqref{eq1-pr8}.

For $q+1 \leqslant k \leqslant n$ we have
$$
\sum\limits_{i=1}^{n}\sum \limits_{j=1}^{n}\langle
\nabla_{Y_i}Y_{n+1}, Y_j \rangle\langle\nabla_{Y_i}Y_j, Z_k
\rangle=\sum\limits_{i=1}^{q-1}\sum \limits_{j=1}^{q-1}\langle
-\frac{1}{2}J(Z_{n+1})X_i,X_j \rangle \langle
\frac{1}{2}[X_i,X_j], Z_k \rangle
$$
$$
+\frac{1}{2} \sum \limits_{i=1}^{q-1}\left(
\vphantom{\sum\limits_{i=1}^{n}} \langle -\frac{1}{2}J(Z_{n+1})X_i
,X_q\rangle+\langle \frac{1}{2}[X_i,X_{n+1}], -Z_q \rangle \right)
\langle \frac{1}{2}[X_i,X_q], Z_k \rangle
$$
$$
+\sum \limits_{j=1}^{q-1}\langle
-\frac{1}{2}J(Z_{n+1})X_q+\frac{1}{2}J(Z_q)X_{n+1}, X_j
\rangle\langle \frac{1}{2} [X_q,X_j], Z_k \rangle
$$
$$
=\frac{1}{4}\sum\limits_{i=1}^{q}\sum \limits_{j=1}^{q}\langle
-J(Z_{n+1})X_i,X_j \rangle \langle [X_i,X_j], Z_k \rangle
$$
$$
=\frac{1}{4}\sum\limits_{1 \leqslant i \leqslant q, \, i=n+1}\sum
\limits_{1 \leqslant j \leqslant q, \, j=n+1}\langle-
J(Z_{n+1})X_i ,X_j\rangle \langle J(Z_k) X_i,X_j\rangle
$$
$$
-\frac{1}{2} \sum \limits_{1 \leqslant i \leqslant q, \,
i=n+1}\langle -J(Z_{n+1})X_{n+1},X_i\rangle \langle
J(Z_k)X_{n+1},X_i\rangle
$$
$$
=\frac{1}{4}\sum\limits_{1 \leqslant i \leqslant q, \,
i=n+1}\langle- J(Z_{n+1})X_i ,J(Z_k) X_i\rangle-\frac{1}{2}
\langle -J(Z_{n+1})X_{n+1},J(Z_k)X_{n+1}\rangle
$$
$$
=\frac{1}{4}\sum\limits_{1 \leqslant i \leqslant q, \,
i=n+1}\langle J(Z_k)J(Z_{n+1})X_i , X_i\rangle-\frac{1}{2} \langle
J(Z_k)J(Z_{n+1})X_{n+1},X_{n+1}\rangle
$$
$$
=-\ric(Z_k,Z_{n+1})+2\langle R(X_{n+1},Z_k)Z_{n+1},X_{n+1}\rangle.
$$
This completes the proof of \eqref{eq1-pr8} and of the lemma, as
$Y_q=X_q-Z_q$, and $Y_{n+1}=X_{n+1}+Z_{n+1}$.
\end{proof}

\begin{proof}[\rm{P r o o f \; o f \; L e m m a  \,  \ref{lem2}}]
Let on $U$
\begin{equation}\label{eq1-pr11-1}
E_i=\sum \limits_{j=1}^{n+1}c_{ij} Y_j,
\end{equation}
where $c_{ij}$, $1\leqslant i \leqslant n$, $1\leqslant j
\leqslant n+1$ are scalar functions on $U$. Note that
$E_i(p)=Y_i$, so $c_{ij}(p)=\delta_{ij}$. Using
\eqref{eq1-pr11-1}, we get
\begin{equation}\label{eq1-pr12}
\begin{array}{c}
 \nabla_{E_i}Y_{n+1}=\sum \limits_{j=1}^{n+1}c_{ij}
\nabla_{Y_j}Y_{n+1}=\frac{1}{2}\sum \limits_{j=1}^{q}c_{ij} \left(
\vphantom{\sum\limits_{i=1}^{n}} [X_j,X_{n+1}]-J(Z_{n+1})X_j
\right) \\

+\frac{1}{2}c_{iq}J(Z_q)X_{n+1}-\frac{1}{2}\sum
\limits_{j=q+1}^{n}c_{ij} J(Z_j)X_{n+1}-c_{i \, n+1}J(Z_{n+1})X_{n+1}.\\
\end{array}
\end{equation}
Also, for $1 \leqslant k \leqslant n$ at $p$ we have
\begin{equation}\label{eq1-pr13}
\nabla_{E_k}E_i=\sum \limits_{j=1}^{n+1}\left(
\vphantom{\sum\limits_{i=1}^{n}} Y_k(c_{ij})
Y_j+c_{ij}(p)\nabla_{Y_k}Y_j\right)=\sum
\limits_{j=1}^{n+1}Y_k(c_{ij}) Y_j + \nabla_{Y_k}Y_i.
\end{equation}
In particular, at $p$
\begin{equation}\label{eq1-pr14}
b_{ki}(p)=\langle \nabla_{E_k}E_i, \eta \rangle(p)=Y_k(c_{i \,
n+1}) + \langle \nabla_{Y_k}Y_i, Y_{n+1} \rangle.
\end{equation}
Considering \eqref{eq1-pr13} for $k=i$, projecting both sides of
it to $T_p M$, and using the properties of the geodesic frame, we
get
$$
0=\sum\limits_{j=1}^{n}Y_i(c_{ij})Y_j+\left(\nabla_{Y_i}Y_i\right)^T.
$$
For $1 \leqslant i \leqslant q-1$ and $q+1 \leqslant i \leqslant
n$ $\nabla_{Y_i}Y_i=0$, and
$\nabla_{Y_q}Y_q=J(Z_q)X_q=\left(\nabla_{Y_q}Y_q\right)^T$, since
$\langle J(Z_q)X_q, X_{n+1}+Z_{n+1}\rangle=0$. Then, for $1
\leqslant j \leqslant q$ we obtain
$$
Y_i(c_{ij})= \left[
\begin{array}{ll}
0, & 1\leqslant i \leqslant q-1;\\
-\langle J(Z_q)X_q, Y_j \rangle, & i=q;\\
0, & q+1\leqslant i \leqslant n.\\
\end{array}
\right.
$$
We can deduce from \eqref{eq1-pr14} and the above considerations
that for $1 \leqslant i \leqslant n$ $b_{ii}(p)=Y_i(c_{i \,
n+1})$. Differentiate \eqref{eq1-pr12} with respect to $E_i$ at
$p$. For $1 \leqslant i \leqslant q-1$ we get
$$
\nabla_{E_i}\nabla_{E_i}Y_{n+1}=-Y_i(c_{i \,
n+1})J(Z_{n+1})X_{n+1}+\frac{1}{2} \nabla_{X_i} \left(
\vphantom{\sum\limits_{i=1}^{n}} [X_i,X_{n+1}]-J(Z_{n+1})X_i
\right)
$$
$$
=-b_{ii}(p)J(Z_{n+1})X_{n+1}-\frac{1}{4}
J([X_i,X_{n+1}])X_i-\frac{1}{4}[X_i, J(Z_{n+1})X_i].
$$
For $i=q$ we have
$$
\nabla_{E_q}\nabla_{E_q}Y_{n+1}=-Y_q(c_{i \,
n+1})J(Z_{n+1})X_{n+1}+\sum\limits_{j=1}^{n}Y_q(c_{qj})\nabla_{Y_j}Y_{n+1}
$$
$$
+\frac{1}{2}\nabla_{Y_q} \left( \vphantom{\sum\limits_{i=1}^{n}}
[X_q,X_{n+1}]-J(Z_{n+1})X_q+J(Z_q)X_{n+1}
\right)=-b_{qq}(p)J(Z_{n+1})X_{n+1}
$$
$$
-\frac{1}{2}\sum\limits_{j=1}^{q-1}\langle J(Z_q)X_q, X_j \rangle
\left( \vphantom{\sum\limits_{i=1}^{n}}
[X_j,X_{n+1}]-J(Z_{n+1})X_j \right)
$$
$$
-\frac{1}{4}[X_q,J(Z_{n+1})X_q]+\frac{1}{4}[X_q,J(Z_q)X_{n+1}]-\frac{1}{4}J(Z_q)
J(Z_{n+1})X_q+\frac{1}{4}J(Z_q)^2X_{n+1}.
$$
For $q+1 \leqslant i \leqslant n$ we obtain
$$
\nabla_{E_i}\nabla_{E_i}Y_{n+1}=-Y_i(c_{i \,
n+1})J(Z_{n+1})X_{n+1}-\frac{1}{2}\nabla_{Z_i}\left(J(Z_i)X_{n+1}\right)
$$
$$
=-b_{ii}(p)J(Z_{n+1})X_{n+1}+\frac{1}{4}J(Z_i)^2 X_{n+1}.
$$
Summing up these expressions, we get for $1 \leqslant k \leqslant q-1$
$$
-\sum \limits_{i=1}^{n}\langle \nabla_{E_i}\nabla_{E_i}Y_{n+1},
X_k \rangle=nH(p)\langle J(Z_{n+1})X_{n+1}, X_k \rangle
$$
$$
+\frac{1}{4}\sum \limits_{i=1}^{q-1}\langle J([X_i,X_{n+1}])X_i,
X_k \rangle+\frac{1}{2}\sum\limits_{j=1}^{q-1}\langle J(Z_q)X_q,
X_j \rangle \langle -J(Z_{n+1})X_j, X_k \rangle
$$
$$
+\frac{1}{4}\langle J(Z_q) J(Z_{n+1})X_q, X_k
\rangle-\frac{1}{4}\langle J(Z_q)^2X_{n+1}, X_k
\rangle-\frac{1}{4}\sum \limits_{i=q+1}^{n} \langle J(Z_i)^2
X_{n+1}, X_k \rangle
$$
$$
=nH(p)\langle J(Z_{n+1})X_{n+1}, X_k \rangle -\frac{1}{2}\sum
\limits_{1 \leqslant i \leqslant q, \, i=n+1}\langle
J([X_{n+1},X_i])X_i, X_k \rangle
$$
$$
=nH(p)\langle J(Z_{n+1})X_{n+1}, X_k \rangle-\ric(X_k,X_{n+1}).
$$
Here we use the equation $J(Z_q)J(Z_{n+1})X_q=J(Z_{n+1})^2X_{n+1}$, which follows from the
construction of the frame. Thus we obtain the first expression in \eqref{eq1-pr15}.

For $q+1 \leqslant k \leqslant n$ we have
$$
-\sum \limits_{i=1}^{n}\langle \nabla_{E_i}\nabla_{E_i}Y_{n+1},
Z_k \rangle= \frac{1}{4}\sum \limits_{i=1}^{q-1}\langle [X_i,
J(Z_{n+1})X_i], Z_k \rangle
$$
$$
+\frac{1}{2}\sum\limits_{j=1}^{q-1}\langle J(Z_q)X_q, X_j \rangle
\langle [X_j,X_{n+1}], Z_k \rangle
$$
$$
+\frac{1}{4}\langle [X_q,J(Z_{n+1})X_q], Z_k
\rangle-\frac{1}{4}\langle [X_q,J(Z_q)X_{n+1}], Z_k \rangle
$$
$$
=-\frac{1}{4}\sum \limits_{1 \leqslant i \leqslant q, \,
i=n+1}\langle J(Z_k)J(Z_{n+1})X_i, X_i \rangle+\langle
J(Z_k)J(Z_{n+1})X_{n+1}, X_{n+1} \rangle
$$
$$
=\ric(Z_k,Z_{n+1})-4\langle R(X_{n+1},Z_k)Z_{n+1},X_{n+1} \rangle.
$$
In the above calculation we used the fact that
$J(Z_q)X_q=J(Z_{n+1})X_{n+1}$ and
$[X_q,J(Z_q)X_{n+1}]=[X_{n+1},J(Z_{n+1})X_{n+1}]$. As
$Y_q=X_q-Z_q$ and $Y_{n+1}=X_{n+1}+Z_{n+1}$, we get the last three
equalities in \eqref{eq1-pr15}.
\end{proof}

\section{Mean curvature and harmonicity} \label{ch2-2}

Consider the tangent bundle $TN$ and the distribution in $TN$ formed by left invariant vector
fields from $\mathcal{Z}$. Since $\mathcal{Z}$ is an abelian ideal, we can integrate this
distribution and obtain a foliation. Denote this foliation by $\mathcal{F}_{\mathcal{Z}}$. Let G be
harmonic. Since by \eqref{eq1}, in this case $Y_k(nH) = 0$ for all $q+1 \leqslant k \leqslant n$,
we have

\begin{corollary}\label{cor1}
If the Gauss map of $M$ is harmonic, then for each leaf $M'$ of
$\mathcal{F}_{\mathcal{Z}}$ the mean curvature of the immersion is
constant on $M \cap M'$.
\end{corollary}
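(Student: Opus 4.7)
The plan is straightforward once one exploits the remark preceding the statement. Harmonicity of $G$ amounts to $\Delta G(p) = 2e(G) G(p)$, i.e. the vector $\Delta G(p)$ is a scalar multiple of $G(p) = Y_{n+1}(e)$, so each coefficient of $Y_k(e)$ for $1 \leqslant k \leqslant n$ on the right-hand side of \eqref{eq1} must vanish at $p$. The second sum in \eqref{eq1} shows that for $q+1 \leqslant k \leqslant n$ this coefficient is simply $-Y_k(nH)$, with no curvature or second fundamental form contributions. Hence
\begin{equation*}
Y_k(nH)(p) = 0 \quad \text{for all} \quad q+1 \leqslant k \leqslant n.
\end{equation*}
The substance of the corollary is therefore the identification of these vectors with a basis of $T_p(M \cap M')$.

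To perform that identification, I would use that the leaf $M'$ of $\mathcal{F}_{\mathcal{Z}}$ through $p$ has $T_p M' = (dL_p)(\mathcal{Z})$, which in the adapted frame is spanned by $\{Z_q, Z_{q+1}, \ldots, Z_n\}$ (a basis of $\mathcal{Z}$, since $Z_{n+1}$ is proportional to $Z_q$). Intersecting with $T_p M = \spann\{Y_1, \ldots, Y_n\}$, one expands a common vector as $\sum_{i=1}^n \alpha_i Y_i$ and requires its $\mathcal{V}$-component $\sum_{i=1}^{q-1} \alpha_i X_i + \alpha_q X_q$ to vanish; linear independence of $X_1, \ldots, X_q$ in $\mathcal{V}$ then forces $\alpha_1 = \cdots = \alpha_q = 0$. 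Consequently $T_p(M \cap M') = \spann\{Y_{q+1}, \ldots, Y_n\}$.

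Combining the two observations, the derivative of $nH$ along every direction tangent to $M \cap M'$ at $p$ vanishes. Since $p \in M \cap M'$ was arbitrary and the same construction can be carried out at any other point of the intersection using its own adapted frame, $H$ has vanishing tangential derivative throughout $M \cap M'$, and is therefore constant on each connected component. The only point requiring mild care is a regularity remark: $M \cap M'$ need not be globally a smooth submanifold, but the argument is pointwise, so it suffices to observe that on any smooth curve contained in $M \cap M'$ the velocity lies in $\spann\{Y_{q+1}, \ldots, Y_n\}$ at each instant, which makes $H$ constant along the curve and yields the claim.
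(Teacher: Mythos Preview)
Your proof is correct and follows exactly the paper's approach: the paper's entire argument is the single sentence preceding the corollary, namely that harmonicity of $G$ together with \eqref{eq1} forces $Y_k(nH)=0$ for $q+1\leqslant k\leqslant n$, and you have simply spelled out the two implicit steps (why harmonicity kills the $Y_k(e)$-coefficients, and why $\{Y_{q+1},\dots,Y_n\}$ spans the tangent space of $M\cap M'$). One small remark: your claim that $X_1,\dots,X_q$ are linearly independent fails when $X_q=0$ (equivalently $\eta(p)\in dL_p(\mathcal{V})$), in which case $Y_q=-Z_q$ also lies in $T_pM\cap dL_p(\mathcal{Z})$; but then every extra term in the $Y_q(e)$-coefficient of \eqref{eq1} vanishes as well (each contains a factor $X_q$ or $Z_{n+1}$), so harmonicity still yields $Y_q(nH)=0$ and the conclusion survives.
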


Now we obtain some analogues of the results for Lie groups with
bi-invariant metrics that were stated in \cite{ES}.

Let $\nu$ be a vector field on $M$ defined by $\nu(p)=Y_q$, for $p \in M$. In other words, we
obtain $\nu(p)$ rotating the unit normal vector $\eta(p)$ by the angle $\frac{\pi}{2}$ in the
2-plane containing $\eta(p)$ and orthogonal to both $dL_p(\mathcal{V})$ and $dL_p(\mathcal{Z})$.

\begin{proposition}\label{prop1-1}

Let $M$ be a compact smooth oriented hypersurface in a $2$-step
nilpotent Lie group $N$. Assume that

\begin{enumerate}

\item the mean curvature of $M$ is constant on the integral curves
of $\nu$;

\item the Gauss map of $M$ is harmonic;

\item $\|B\|^2+\ric(\eta,\eta) \geqslant 0$ on $M$ and
$\|B\|^2+\ric(\eta,\eta) > 0$ in some point of $M$;

\item the set of points $p \in M$ such that $\eta(p) \notin
dL_p(\mathcal{V})$ is dense in $M$.

\end{enumerate}

Then $G(M)$ is contained in a closed hemisphere of $S^n$ if and
only if $G(M)$ is contained in a great sphere of $S^n$.

\end{proposition}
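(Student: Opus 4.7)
My plan is as follows. The reverse implication is trivial, since any great sphere sits inside either of the closed hemispheres it bounds, so I focus on the forward direction.

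I would assume $G(M) \subset \mathcal{H}_v := \{w \in S^n : \langle w, v \rangle \geq 0\}$ for some unit $v \in \mathcal{N}$, and set $f := \langle G, v \rangle \in C^\infty(M)$, which is pointwise non-negative. The harmonic-map characterization $\Delta_M G = 2 e(G)\, G$, paired with the constant vector $v$, yields
\[
\Delta_M f = 2 e(G)\, f \quad \text{on } M.
\]
Integrating over the compact manifold $M$ gives $\int_M \Delta_M f\, dV_M = 0$, hence $\int_M e(G)\, f\, dV_M = 0$; since $e(G) \geq 0$ and $f \geq 0$, we conclude $e(G)\, f \equiv 0$ pointwise on $M$.

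Next I would consider the open set $U := \{p \in M : f(p) > 0\}$. On $U$, $e(G) \equiv 0$, so $dG \equiv 0$ and $G$ is locally constant; by continuity, $G$ is constant on each connected component of $U$. A boundary point of such a component would simultaneously satisfy $f = 0$ (from lying outside $U$) and carry, by continuity of $G$, the positive value of $\langle G, v \rangle$ from the adjacent interior — a contradiction. Hence $U$ is clopen in $M$, and by connectedness either $U = \emptyset$ (so $f \equiv 0$ and $G(M) \subset \{\langle \cdot, v \rangle = 0\}$, which is a great sphere), or $U = M$ (so $G$ is constant and $G(M)$ is a single point, contained in any great sphere through it). Either way, $G(M)$ lies in some great sphere.

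The main obstacle, and the place where the remaining hypotheses (i), (iii), (iv) should enter, is the degenerate alternative $U = M$: to sharpen the conclusion so that $G(M)$ actually lies in the specific equator $\{\langle \cdot, v \rangle = 0\}$ of the given hemisphere, one must exclude the constant-$G$ case. If $G \equiv w_0$, then $\eta$ is a left-invariant vector field with nontrivial $\mathcal{Z}$-component by (iv); Theorem~\ref{th1} applied with $\Delta G = 0$ then yields an identity expressing $\|B\|^2 + \ric(\eta,\eta)$ purely in terms of $J$-operators, curvature data, and the second-fundamental-form coefficients $b_{ij}$. Combining this with (i) — which, via Corollary~\ref{cor1} and $Y_q(H) = 0$, pins down the relevant derivatives of $H$ — should force $\|B\|^2 + \ric(\eta, \eta) \leq 0$ pointwise on $M$, contradicting the strict positivity required at some point by (iii). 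Extracting a definite sign from the $Y_{n+1}$-component identity of Theorem~\ref{th1} is the delicate technical step I expect to absorb most of the work.
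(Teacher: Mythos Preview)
Your route diverges substantially from the paper's, and the step you leave unfinished is exactly where the paper's key idea lives.

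The paper does not work with the abstract harmonic-map equation at all. It uses the explicit formula of Theorem~\ref{th1} directly. Harmonicity forces every $Y_k(e)$-coefficient ($1\le k\le n$) in \eqref{eq1} to vanish; in particular the $Y_q(e)$-coefficient is zero. Hypothesis~(i) kills the $Y_q(nH)$ term there, and the term $nH\langle J(Z_{n+1})X_{n+1},X_q\rangle$ vanishes by skew-symmetry of $J$ since $X_{n+1}\parallel X_q$. On the dense set supplied by (iv) one has $X_q\ne 0$ and $X_{n+1}=\tfrac{|X_{n+1}|}{|X_q|}X_q$; consequently each remaining term in the $Y_{n+1}(e)$-coefficient equals the corresponding term of the (now vanishing) $Y_q(e)$-coefficient multiplied by $|X_{n+1}|/|X_q|$. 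All that survives in the $Y_{n+1}(e)$-coefficient is $-\|B\|^2-\ric(\eta,\eta)$, first on a dense set and then everywhere by continuity. Hence
\[
\Delta f=-\bigl(\|B\|^2+\ric(\eta,\eta)\bigr)f,
\]
and with $f\le 0$ and (iii), $f$ is subharmonic on the compact $M$, hence constant; the strict-positivity point in (iii) forces $f\equiv 0$, i.e.\ $G(M)\subset v^{\perp}$.

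Two comments on your approach. First, for the proposition \emph{as literally stated} (``a great sphere'', not ``the equator $v^{\perp}$''), your steps 1--4 already finish the job using only hypothesis~(ii): if $G$ is constant its image is a single point, which certainly lies on some great sphere. Your step~5 is then unnecessary and (i), (iii), (iv) go unused. Second, the paper in fact proves the sharper conclusion $G(M)\subset v^{\perp}$; for that your step~5 is genuinely needed, and the ``delicate technical step'' you anticipate is precisely the $Y_q$/$Y_{n+1}$ coefficient comparison above. That comparison identifies the normal component of $\Delta G$ with $-\bigl(\|B\|^2+\ric(\eta,\eta)\bigr)$, so if $G$ were constant one would get $\|B\|^2+\ric(\eta,\eta)\equiv 0$, contradicting (iii) at once --- no separate sign-extraction argument is required.
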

\begin{proof}
One of the implications in the proposition is obvious. Suppose that some closed hemisphere of $S^n$
contains $G(M)$, i.e., there exists a unit vector $v \in \mathbb{R}^{n+1}$ such that for all $p \in
M$ $\langle G(p), v \rangle$ is nonpositive. Consider a smooth function $f=\langle G, v \rangle $
on $M$. The coefficient of $Y_q(e)$ in \eqref{eq1} vanishes. For all points from some dense set of
$M$ we have $X_q \neq 0$ and thus $X_{n+1}=\frac{\left| X_{n+1}\right|}{\left| X_q\right|}X_q$.
This, together with $Y_q(nH)=0$, implies that the coefficient of $Y_{n+1}(e)$ is equal to
$-\|B\|^2-\ric(\eta,\eta)$ on the dense subset of $M$ and hence on the whole $M$ because both the
coefficient and $-\|B\|^2-\ric(\eta,\eta)$ are continuous. Taking the scalar product of \eqref{eq1}
with $v$, we obtain
$$
\Delta f = -\left( \vphantom{2^{2^2}} \|B\|^2 +\ric(\eta,\eta)
\right)f \geqslant 0.
$$
Then $f$ is a subharmonic function on the compact manifold $M$. Thus $f$ is constant, and $\left(
\vphantom{2^{2^2}} \|B\|^2 +\ric(\eta,\eta) \right)f=-\Delta f=0$. From the hypothesis, this
implies $f=0$, hence $G(M)$ is contained in the equator $v^{\perp}$. This completes the proof.
\end{proof}

\begin{proposition}\label{prop1-2}
Suppose that a smooth oriented hypersurface $M$ in a $2$-step
nilpotent Lie group $N$ is CMC, its Gauss map is harmonic, for all
$p$ from some dense set of $M$ the normal vector $\eta(p) \notin
dL_p(\mathcal{V})$, and $G(M)$ is contained in an open hemisphere
of $S^n$. Then $M$ is stable.
\end{proposition}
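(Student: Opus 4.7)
The plan is to exhibit a strictly positive smooth solution $f$ of the Jacobi equation $Lf=0$ on $M$ and invoke the Fischer--Colbrie--Schoen criterion recalled in Section~\ref{ch1} (\cite[Theorem~1]{FiSc}). The natural candidate is $f = \langle G, v\rangle$, where $v \in S^n$ is the pole of the open hemisphere containing $G(M)$; the hemisphere hypothesis immediately gives $f>0$ on $M$.

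The next step is to compute $\Delta f = \langle \Delta G, v\rangle$ from formula \eqref{eq1} of Theorem~\ref{th1}. Because $M$ is CMC every derivative $Y_k(nH)$ vanishes, so the coefficients of $Y_{q+1}(e),\dots,Y_n(e)$ drop out; harmonicity of $G$ forces $\Delta G(p)$ to be proportional to $G(p)=Y_{n+1}(e)$, which simultaneously kills the coefficients of $Y_1(e),\dots,Y_q(e)$. For the coefficient of $Y_{n+1}(e)$ itself I would rerun the computation already carried out in the proof of Proposition~\ref{prop1-1}: on the dense set where $\eta(p)\notin dL_p(\mathcal{V})$ we have $X_q\neq 0$ and $X_{n+1}=(|X_{n+1}|/|X_q|)X_q$, and via \eqref{eq1-ab0-1} the remaining cross terms in \eqref{eq1} collapse to $-\|B\|^2-\ric(\eta,\eta)$; continuity then extends this identification to all of $M$. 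Consequently $\Delta G = -(\|B\|^2+\ric(\eta,\eta))G$, and taking inner product with $v$ gives
\[
\Delta f = -\bigl(\|B\|^2+\ric(\eta,\eta)\bigr)f, \qquad \text{i.e.,} \qquad Lf=0.
\]

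Since $f$ is a strictly positive smooth solution of $Lf=0$ on $M$, the Fischer--Colbrie--Schoen criterion directly yields stability of $M$, irrespective of whether $H$ is zero or nonzero (the criterion furnishes the minimal-type stability, which is a priori at least as strong as the CMC-type stability defined in Section~\ref{ch1}, because the test-function class there is a subspace of the unrestricted one). The one genuinely delicate step in the argument is the density-plus-continuity reduction pinning down the coefficient of $Y_{n+1}(e)$ globally on $M$; this is exactly where hypothesis on the density of $\{p : \eta(p) \notin dL_p(\mathcal{V})\}$ enters, and it is the main obstacle, requiring careful bookkeeping of every term in \eqref{eq1} to make sure nothing that depends on the specific alignment of $X_{n+1}$ with $X_q$ is left unaccounted for.
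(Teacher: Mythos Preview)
Your argument is correct and follows exactly the approach of the paper: define $w=\langle G,v\rangle>0$, use harmonicity of $G$ together with CMC and the density hypothesis (as in the proof of Proposition~\ref{prop1-1}) to identify $\Delta G=-(\|B\|^2+\ric(\eta,\eta))G$, hence $Lw=0$, and then invoke \cite[Theorem~1]{FiSc}. The paper's own proof is just a two-line ``As above'' reference to Proposition~\ref{prop1-1}; you have simply written out that reference in full (one minor point: the collapse of the extra terms in the $Y_{n+1}(e)$-coefficient comes from comparing with the vanishing $Y_q(e)$-coefficient via $X_{n+1}=\lambda X_q$, not directly from \eqref{eq1-ab0-1}, but this does not affect the validity of the argument).
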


\begin{proof}
From the hypothesis, there exists $v \in \mathbb{R}^{n+1}$ such
that for all $p \in M$ $\langle G(p), v \rangle
>0$. As in the proof of Proposition \ref{prop1-1}, consider a scalar function
$w(p)=\langle G(p), v \rangle $ on $M$. This function is smooth and positive. As above, \eqref{eq1}
implies the Jacobi equation $\left(\Delta +\|B\|^2+\ric(\eta,\eta)\right)w =0$. Now \cite[Theorem
1]{FiSc} implies the stability of $M$.
\end{proof}

\section{Groups of Heisenberg type} \label{ch3}

Let $N$ be a group of Heisenberg type. Then from \eqref{eq1-ab0}, for all $X,Y \in \mathcal{V}$,
$$
\ric(X,Y)=\frac{1}{2}\sum\limits_{k=q}^{n+1}\langle J(Z_k)^2 X, Y
\rangle=-\frac{1}{2}(n+1-q)\langle X,Y \rangle.
$$
Also, we can rewrite the coefficients in \eqref{eq1} for $1
\leqslant k \leqslant q$ and for $k=n+1$ in the form
$$
\sum \limits_{j=1}^{q-1} \langle J([X_k,X_j])X_j, X_{n+1}
\rangle+4\langle R(X_k, Z_{n+1})Z_{n+1}, X_{n+1} \rangle
$$
$$
=\left[
\begin{array}{ll}
0, & 1 \leqslant k \leqslant q-1;\\
\left|Z_{n+1}\right|\left|X_{n+1}\right|\left(q-n-1+\left|Z_{n+1}\right|^2\right), & k=q;\\
\left|X_{n+1}\right|^2\left(q-n-1+\left|Z_{n+1}\right|^2\right), & k=n+1.\\
\end{array}
\right.
$$
Moreover,
$$
\ric(Z_{n+1},Z_{n+1})=-\frac{1}{4}\trace
J(Z_{n+1})^2=\frac{q}{4}\left|Z_{n+1}\right|^2,
$$
and thus
$$
\ric(Y_{n+1},Y_{n+1})=
\frac{q}{4}\left|Z_{n+1}\right|^2-\frac{1}{2}(n+1-q)\left|X_{n+1}\right|^2.
$$
Equation \eqref{eq1} now takes the form
\begin{equation}\label{eq4}
\begin{array}{c}
\Delta G(p) = \sum \limits_{k=1}^{q-1}\left( -Y_k(nH)- 2\sum
\limits_{i=1}^{q} \sum \limits_{j=q+1}^{n}b_{ij}(p)\langle
J(Z_j)X_i, X_k \rangle
\right.\\

\left. \vphantom{\sum\limits_{i=1}^{n}} +2\sum
\limits_{i=1}^{q}b_{iq}(p)\langle J(Z_q)X_i, X_k
\rangle+nH(p)\langle
J(Z_{n+1})X_{n+1}, X_k \rangle \right)Y_k(e) \\

+\left( \vphantom{\sum\limits_{i=1}^{n}}
-Y_q(nH)+\left|Z_{n+1}\right|\left|X_{n+1}\right|\left(q-n-1+\left|Z_{n+1}\right|^2\right)
 \right.\\

\vphantom{\sum\limits_{i=1}^{n}} -2\sum \limits_{i=1}^{q} \sum
\limits_{j=q+1}^{n}b_{ij}(p)\langle
J(Z_j)X_i, X_q \rangle\\

\left. +2\sum \limits_{i=1}^{q}b_{iq}(p)\langle J(Z_q)X_i, X_q
\rangle+nH(p)\langle
J(Z_{n+1})X_{n+1}, X_q \rangle \right)Y_q(e) \\

+\sum \limits_{k=q+1}^n\left( \vphantom{\sum\limits_{i=1}^{n}} -Y_k(nH) \right)Y_k(e)\\

+\left( - 2\sum \limits_{i=1}^{q} \sum
\limits_{j=q+1}^{n}b_{ij}(p)\langle J(Z_j)X_i, X_{n+1}
\rangle\right. \\

\left. \vphantom{\sum\limits_{i=1}^{n}}+2\sum
\limits_{i=1}^{q}b_{iq}(p)\langle J(Z_q)X_i, X_{n+1}
\rangle-\|B\|^2(p)- \frac{q}{4}\left|Z_{n+1}\right|^2\right.\\

\left. \vphantom{\sum\limits_{i=1}^{n}}
+\left|X_{n+1}\right|^2\left(\frac{1}{2}(q-n-1)+\left|Z_{n+1}\right|^2\right)
 \right)Y_{n+1}(e). \\
\end{array}
\end{equation}

Consider the case $n=q$, i.e., $\dim \mathcal{Z}=1$. It is easy to
see that $n$ is then even, $n=2m$, where $m$ is a positive
integer, and $N$ is isomorphic to the $2m+1$-dimensional
Heisenberg group (recall that $N$ is connected and simply
connected).

In this case at $p$ we can choose $X_1, \dots, X_{2m+1}$ so that
$$
J(Z)X_i=X_{m+i}, \, 1 \leqslant i \leqslant m-1;
$$
$$
J(Z)X_m= \frac{X_{2m}}{\left| X_{2m}\right|}=\frac{X_{2m}}{\left|
Z_{2m+1}\right|} \text{ if } Z_{2m+1} \neq 0; \text{or}
\frac{X_{2m+1}}{\left| X_{2m+1}\right|} \text{ if } X_{2m+1} \neq
0;
$$
$$
J(Z)X_{m+i}=-X_i, \, 1 \leqslant i \leqslant m-1;
$$
$$
J(Z)X_{2m}=-\left| X_{2m}\right|X_m=-\left| Z_{2m+1}\right|X_m;
$$
$$
J(Z)X_{2m+1}=-\left| X_{2m+1}\right|X_m.
$$
Choose $Z_{2m}=\left|X_{2m+1}\right|Z$ and
$Z_{2m+1}=\left|Z_{2m+1}\right|Z$. Then \eqref{eq4} has the form
\begin{equation}\label{eq5}
\begin{array}{c}
\Delta G(p) = -\sum \limits_{k=1}^{m-1}\left(
\vphantom{\sum\limits_{i=1}^{n}} Y_k(2mH)+ 2b_{2m \,
m+k}(p)\left|X_{2m+1}\right|\right)Y_k(e) \\

-\left( \vphantom{\sum\limits_{i=1}^{n}} Y_m(2mH)+
2mH(p)\left|X_{2m+1}\right|\right.\\

\left. \vphantom{\sum\limits_{i=1}^{n}} +2b_{2m \,
2m}(p)\left|X_{2m+1}\right|\left|Z_{2m+1}\right|\right)Y_m(e) \\

-\sum \limits_{k=1}^{m-1}\left( \vphantom{\sum\limits_{i=1}^{n}}
Y_{m+k}(2mH)- 2b_{2m \,
k}(p)\left|X_{2m+1}\right|\right)Y_k(e) \\

-\left( \vphantom{\sum\limits_{i=1}^{n}} Y_{2m}(2mH)+
\left|X_{2m+1}\right|^3\left|Z_{2m+1}\right|
\right.\\

\left. \vphantom{\sum\limits_{i=1}^{n}} -2b_{2m \,
m}(p)\left|X_{2m+1}\right|\left|Z_{2m+1}\right|\right)Y_{2m}(e) \\

-\left( \vphantom{\sum\limits_{i=1}^{n}}
\|B\|^2(p)+\frac{m}{2}\left|Z_{2m+1}\right|^2-
\frac{1}{2}\left|X_{2m+1}\right|^2  \right.\\

\left. \vphantom{\sum\limits_{i=1}^{n}} +\left|X_{2m+1}\right|^4-
2b_{2m \,
m}(p)\left|X_{2m+1}\right|^2\right)Y_{2m+1}(e). \\
\end{array}
\end{equation}

Consider an example of the three-dimensional Heisenberg group $Nil$. In the space $\mathbb{R}^3$
with Cartesian coordinates $(x,y,z)$, define vector fields
$$
X=\frac{\partial}{\partial x}, \, Y=\frac{\partial}{\partial y}+ x\frac{\partial}{\partial z}, \,
Z=\frac{\partial}{\partial z} \, .
$$
Then $\spann (X, Y, Z)$ is a Lie algebra (with the only nonzero bracket $[X,Y]=Z$), which is the
Lie algebra of $Nil$. Introduce a scalar product in such a way that the vectors $X, Y$ and $Z$ are
orthonormal. Consider the following unit vector field:
$$
\eta = \frac{xY+Z}{\sqrt{1+x^2}},
$$
and vector fields
$$
F_1=X, \, F_2=\frac{Y-xZ}{\sqrt{1+x^2}},
$$
which are orthogonal to $\eta$. In the notation of section \ref{ch2-1}, in each $p$ $F_1=X_1$,
$F_2=X_2-Z_2$, $\eta=X_3+Z_3$. By direct computation of covariant derivatives it can be shown that
the distribution spanned by $F_1$ and $F_2$ is integrable and form the tangent bundle of some
two-dimensional foliation $\mathcal{F}$ in $Nil$. From the computation of the second fundamental
form we obtain $\|B\|^2=\frac{\left(x^2-1\right)^2}{2\left(1+x^2\right)^2}$, and $H=0$. Thus the
leaves of this foliation are minimal surfaces. The Laplacian on $G=\eta$ is
$$
\Delta G =\left(
F_1F_1+F_2F_2-\left(\nabla_{F_1}F_1\right)^T-\left(\nabla_{F_2}F_2\right)^T
\right)G
$$
$$
=-\frac{x}{\left(1+x^2\right)^2} F_2-
\frac{1}{\left(1+x^2\right)^2} \eta.
$$
We obtain the same result considering \eqref{eq5} at some $p$. In
fact,
$$
2b_{22}\left|X_{3}\right|\left|Z_{3}\right|=0;
$$
$$
\left|X_{3}\right|^3\left|Z_{3}\right|-
2b_{21}\left|X_{3}\right|\left|Z_{3}\right|=\frac{x}{\left(1+x^2\right)^2};
$$
$$
\|B\|^2+\frac{1}{2}\left|Z_{3}\right|^2-
\frac{1}{2}\left|X_{3}\right|^2 +\left|X_{3}\right|^4-
2b_{21}\left|X_{3}\right|^2=\frac{1}{\left(1+x^2\right)^2}.
$$

In particular, foliation $\mathcal{F}$ gives an example of a CMC-surface in $Nil$ such that its
Gauss map is not harmonic.

\begin{proposition}\label{prop3}
Suppose that $M$ is a smooth oriented $2m$-dimensional manifold immersed in the $2m+1$-dimensional
Heisenberg group. If any two of the following three claims are true, then the third one is also
true.
\begin{enumerate}

\item\label{prop3-it1} $M$ is CMC;

\item\label{prop3-it2} the Gauss map of $M$ is harmonic;

\item\label{prop3-it3} at every point of $M$, the following holds:
\begin{equation}\label{eq5-1}
\left\{
\begin{array}{l}

\vphantom{\sum\limits_{i=1}^{n}} b_{2m \,k}=0,
1 \leqslant k \leqslant m-1, \, m+1 \leqslant k \leqslant 2m-1;\\

\vphantom{\sum\limits_{i=1}^{n}}
\left|Z_{2m+1}\right|\left(\left|X_{2m+1}\right|^2- 2b_{2m \,
m}\right)=0;\\

\vphantom{\sum\limits_{i=1}^{n}} \left|Z_{2m+1}\right|\left(b_{1
\, 1}+ \dots + b_{2m-1 \, 2m-1}
+ 3b_{2m \, 2m}\right)=0. \\

\end{array}
\right.
\end{equation}
Here $b_{ij}$, $1 \leqslant i,j \leqslant 2m$ are the coefficients
of the second fundamental form of $M$ in the basis chosen as
above.
\end{enumerate}
\end{proposition}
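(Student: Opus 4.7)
The plan is to read off the coefficients of $Y_k(e)$ in $\Delta G$ directly from \eqref{eq5} and translate the three claims into algebraic/differential conditions on those coefficients. Since $G(p)=Y_{2m+1}(e)$, the characterisation of harmonicity $\Delta G=2e(G)\,G$ recalled in the preliminaries is equivalent to the simultaneous vanishing at every point of the coefficients of $Y_1(e),\dots,Y_{2m}(e)$ in \eqref{eq5}. Each such coefficient may be written as $-\bigl(Y_k(2mH)+A_k\bigr)$, where the algebraic piece $A_k$ depends only on $b_{ij}$, $H$, $|X_{2m+1}|$ and $|Z_{2m+1}|$, and condition \ref{prop3-it1} is equivalent to $Y_k(2mH)=0$ for every $k=1,\dots,2m$, since $Y_1,\dots,Y_{2m}$ span $T_pM$ and $M$ is connected. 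A direct inspection of the four blocks in \eqref{eq5} is intended to confirm that, up to an overall factor $|X_{2m+1}|$, the pieces $A_k$ and $A_{m+k}$ for $1\le k\le m-1$ correspond to the first relation of \eqref{eq5-1}, while $A_m$ and $A_{2m}$ correspond respectively to the third and second relations of \eqref{eq5-1}.

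With this dictionary in hand, each implication becomes a short algebraic check. For \ref{prop3-it1}$+$\ref{prop3-it3}$\Rightarrow$\ref{prop3-it2} I would observe that \ref{prop3-it1} kills every derivative piece, that the first relation in \eqref{eq5-1} kills $A_k$ and $A_{m+k}$ for $1\le k\le m-1$, and that multiplying the second and third relations in \eqref{eq5-1} by $|X_{2m+1}|$ produces the vanishing of $A_{2m}$ and $A_m$ respectively. Running the same computation backwards gives \ref{prop3-it2}$+$\ref{prop3-it3}$\Rightarrow$\ref{prop3-it1}: once the $A_k$'s vanish by \eqref{eq5-1} and the coefficients of $Y_k(e)$ vanish by \ref{prop3-it2}, one reads off $Y_k(2mH)=0$ for every $k=1,\dots,2m$, so $H$ is locally constant and hence constant by connectedness of $M$.

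The delicate direction is \ref{prop3-it1}$+$\ref{prop3-it2}$\Rightarrow$\ref{prop3-it3}. Under \ref{prop3-it1} the equations from \ref{prop3-it2} become $A_k=0$ for each $k$, which are precisely the three left-hand sides of \eqref{eq5-1} multiplied by $|X_{2m+1}|$. I would first recover \eqref{eq5-1} on the open set $U=\{p\in M:|X_{2m+1}|(p)\ne 0\}$ by dividing, and then extend the identities to $\overline U$ by continuity of $b_{ij}$, $|X_{2m+1}|$ and $|Z_{2m+1}|$. The main obstacle is the treatment of the exceptional locus $M\setminus\overline U$, where $\eta\in dL_p(\mathcal{Z})$ and harmonicity yields no information on $b_{ij}$; there \eqref{eq5-1} has to be salvaged by a density/continuity argument, and the careful bookkeeping of the $A_m$ and $A_{2m}$ blocks is what makes the calculation in \eqref{eq5} pay off.
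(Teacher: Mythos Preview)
Your approach is essentially the same as the paper's: read off the tangential coefficients of $\Delta G$ from \eqref{eq5}, write each as $-(Y_k(2mH)+A_k)$, and observe that \ref{prop3-it1} controls the derivative pieces while \ref{prop3-it3} controls the algebraic pieces $A_k$. The implications \ref{prop3-it3}$\Rightarrow$(\ref{prop3-it1}$\Leftrightarrow$\ref{prop3-it2}) are handled correctly.

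There is one genuine gap in the delicate direction \ref{prop3-it1}$+$\ref{prop3-it2}$\Rightarrow$\ref{prop3-it3}. You correctly note that the equations $A_k=0$ recover \eqref{eq5-1} only after dividing by $|X_{2m+1}|$, and you correctly flag the exceptional locus $M\setminus\overline U$ as the obstacle. But you do not actually resolve it: you say the identities ``have to be salvaged by a density/continuity argument'' and then point to ``careful bookkeeping of the $A_m$ and $A_{2m}$ blocks''. No amount of bookkeeping in \eqref{eq5} will help on an open set where $|X_{2m+1}|\equiv 0$, since every $A_k$ already carries a factor $|X_{2m+1}|$ and hence vanishes there automatically, while the first line of \eqref{eq5-1} has no such factor. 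What is missing is the reason why $U$ is \emph{dense}, so that $M\setminus\overline U=\varnothing$ and continuity finishes the job. The paper supplies exactly this: on any open set where $|X_{2m+1}|=0$ one has $\eta\in dL_p(\mathcal{Z})$, hence $T_pM=dL_p(\mathcal{V})=dL_p(\mathcal{Z}^\perp)$; but the distribution $\mathcal{Z}^\perp$ is non-integrable (since $[\mathcal{V},\mathcal{V}]\subset\mathcal{Z}$ is nonzero in a $2$-step nilpotent algebra), so no such open set can exist. Once you insert this one-line observation, your argument closes and coincides with the paper's.
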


\begin{proof}
If \ref{prop3-it3} is true, then the equivalency of \ref{prop3-it1} and \ref{prop3-it2} immediately
follows from \ref{eq5}. Suppose \ref{prop3-it1} and \ref{prop3-it2} are true. Let $A$ be a set of
such points of $M$ that $\left|X_{2m+1}\right| \neq 0$. At the points of $A$ \ref{eq5} implies the
expressions in \eqref{eq5-1} . Since the distribution orthogonal to $Z$ is non-integrable, $A$ is
dense in $M$. Now the continuity of the left hand sides of the equations \eqref{eq5-1} implies
\ref{prop3-it3}.
\end{proof}

In the case $m=1$ the next theorem shows that the restrictions for
$M$ arising from \eqref{eq5-1} are rather strict.

\begin{theorem}\label{th2}
Let $M$ be a smooth oriented CMC-surface in the Heisenberg group $Nil$ whose Gauss map is harmonic.
Then $M$ is a ''cylinder'', that is, its position vector in the coordinates $x$, $y$, $z$ has the
form
\begin{equation}\label{eq6}
r(s,t)=(f_1(s),f_2(s),t),
\end{equation}
where $f_1$ and $f_2$ are some smooth functions.
\end{theorem}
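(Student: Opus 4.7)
The plan is to argue by contradiction. Being a ``cylinder'' in the sense of \eqref{eq6} is equivalent to $Z = \partial/\partial z$ being tangent to $M$ at every point, which after fixing an orientation is in turn equivalent to $f := \langle \eta, Z \rangle$ vanishing identically on $M$. Suppose instead that the open set $U := \{p \in M : f(p) > 0\}$ is nonempty, and set $a := |X_3| = \sqrt{1-f^2}$; the goal is to derive a contradiction. By the non-involutivity $[X, Y] = Z \notin \mathcal V$, the locus $\{\eta = \pm Z\} = \{a = 0\}$ has empty interior, so $a > 0$ on an open dense subset of $U$, and I restrict attention there.

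On $U \cap \{a > 0\}$, Proposition \ref{prop3} (applied with $m = 1$) forces the coefficients of the second fundamental form in the adapted frame of Section \ref{ch2-1} to be $b_{11} = 3H$, $b_{22} = -H$, $b_{12} = a^2/2$. The adapted frame is then explicit: writing $\hat X_3 := X_3/|X_3|$, one has $Y_1 = X_1 = J(Z)\hat X_3 \in \mathcal V$, $Y_2 = f\hat X_3 - aZ$, and $\eta = a\hat X_3 + fZ$. I would compute each $\nabla_{Y_i}Y_j$ directly from the Heisenberg connection \eqref{pr1} and decompose it in the orthonormal basis $(Y_1, Y_2, \eta)$. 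Equating the $\eta$-components with the $b_{ij}$ prescribed by Proposition \ref{prop3} yields explicit formulas for the derivatives $Y_i(f)$ and $Y_i(\theta)$ (where $\hat X_3 = \cos\theta\, X + \sin\theta\, Y$) in terms of $a, f, H$; the tangent components give the induced Levi-Civita connection $1$-form $\omega := \omega_1^2$, again in terms of $a, f, H$.

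The last step is to impose the two Codazzi--Mainardi identities. The right-hand sides $\langle R(Y_1, Y_2)Y_k, \eta\rangle$ are computed once using the curvature formulas \eqref{pr2}: a short calculation yields $R(Y_1, Y_2)\eta = -af\, Y_1$, so $\langle R(Y_1, Y_2)Y_1, \eta\rangle = af$ and $\langle R(Y_1, Y_2)Y_2, \eta\rangle = 0$. Substituting the expressions from the previous step and simplifying via $a^2 + f^2 = 1$ collapses the two Codazzi equations (essentially) to $2Hf/a = 0$ and $12H^2 = 2a^2$. Since $f > 0$ and $a > 0$, the first forces $H = 0$ and then the second forces $a = 0$, a contradiction.

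The contradiction shows $U = \emptyset$, so $Z$ is tangent to $M$ at every point. Integrating the flow of $Z$ on $Nil$ (namely the vertical translation $(x, y, z) \mapsto (x, y, z+t)$) inside $M$ shows that $M$ is a union of vertical lines, yielding the cylinder parametrization \eqref{eq6}. The principal obstacle is the bookkeeping involved in computing $\nabla_{Y_i} Y_j$ in the moving adapted frame, whose orientation depends on the point through $\theta$, $f$, $a$; otherwise no ingredient is needed beyond the material already developed in Sections \ref{ch1}--\ref{ch2-1}.
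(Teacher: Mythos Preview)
Your approach is correct and is in fact somewhat cleaner than the paper's. Both arguments begin the same way: restrict to the open set where $a,f>0$, invoke Proposition~\ref{prop3} to pin down the second fundamental form as $b_{11}=3H$, $b_{22}=-H$, $b_{12}=a^{2}/2$, and then bring in the Codazzi equations with the curvature terms $\langle R(Y_1,Y_2)Y_1,\eta\rangle=af$ and $\langle R(Y_1,Y_2)Y_2,\eta\rangle=0$. The difference lies in how the Codazzi equations are exploited. The paper first extracts from the integrability condition and the relation $b_{11}=3H$ that $H=\kappa_1=0$; it then observes that the integral curves of $Y_1$ are ambient geodesics tangent to $\mathcal V$ and invokes the explicit description of geodesics in $Nil$ from \cite[Propositions~(3.1) and~(3.5)]{E1} to conclude that $Y_1$ is a \emph{left invariant} field, after which a further chain of computations produces the contradiction.

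Your route avoids this external input entirely. Parametrizing the horizontal part of $\eta$ through the angle $\theta$ and matching all four entries $b_{11},b_{12},b_{21},b_{22}$ against the ambient covariant derivatives yields closed formulas for $Y_i(\theta)$, $Y_i(a)$, and hence for $\kappa_1=3Hf/a$ and $\kappa_2=f^{3}/(2a)$ directly. Feeding these into the two Codazzi equations indeed collapses them to $Hf/a=0$ and $6H^{2}=a^{2}$ (your $12H^{2}=2a^{2}$), giving the contradiction in one stroke. This is a genuine simplification: no knowledge of the geodesic flow of $Nil$ is needed, and the argument stays entirely within the material of Sections~\ref{ch1}--\ref{ch2-1}. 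One cosmetic point: you should take $U=\{f\neq 0\}$ (or flip orientation) rather than $\{f>0\}$, since a priori $f$ could be nonpositive.
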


\begin{proof}
For each $p \in M$ denote $a(p)=\left|X_3\right|$, $b(p)=\left|Z_3\right|$. Then $a$ and $b$ are
smooth scalar functions on $M$, and $a^2+b^2=1$. Consider an arbitrary point $p$ of $M$. Choose
$X_1$ as above and put $X_2=J(Z)X_1$. Denote by $T_1$ and $T_2$ the vector fields that at each $p
\in M$ are equal to $X_1$ and $X_2$ respectively. Consider unit tangent vector fields $F_1$ and
$F_2$, and a unit normal vector field $\eta$ of $M$ of the form
$$
F_1=T_1, \, F_2=bT_2-aZ, \, \eta=aT_2+bZ.
$$
Denote by $\kappa_1$ and $\kappa_2$ the geodesic curvatures of the integral curves of $F_1$ and
$F_2$ respectively. In other words,
\begin{equation}\label{eq6-pr1}
\overline{\nabla}_{F_1}F_1=\kappa_1F_2, \,
\overline{\nabla}_{F_1}F_2=-\kappa_1F_1,
\,\overline{\nabla}_{F_2}F_1=-\kappa_2F_2,
\,\overline{\nabla}_{F_2}F_2=\kappa_2F_1,
\end{equation}
where $\overline{\nabla}$ is the Riemannian connection on $M$
induced by the immersion. The Gaussian curvature of the surface is
\begin{equation}\label{eq6-pr2}
K=F_1(\kappa_2)+F_2(\kappa_1)-\left(\kappa_1\right)^2-
\left(\kappa_2\right)^2.
\end{equation}

Assume that for some $p \in M$ $a(p) \neq 0$ and $b(p) \neq 0$. Then $ab \neq 0$ on some
neighborhood $U$ of $p$. Then \eqref{eq5-1} implies that on $U$ the matrix of the second
fundamental form of $M$ is
\begin{equation}\label{eq6-pr3}
\left(
\begin{array}{cc}
  3H & \frac{1}{2}a^2 \\
  \frac{1}{2}a^2 & -H \\
\end{array}
\right) .
\end{equation}
In particular, the extrinsic curvature $K_{ext}$ of the surface is $-3H^2-\frac{1}{4}a^4$.

Denote by $B$ the second fundamental form of the immersion. Then
the Codazzi equations for $M$ are
$$
\left(\nabla_{F_1} B\right)(F_2,F_1)-\left(\nabla_{F_2}
B\right)(F_1,F_1)=\langle R(F_1,F_2)F_1,\eta\rangle=ab;
$$
$$
\left(\nabla_{F_2} B\right)(F_1,F_2)-\left(\nabla_{F_1}
B\right)(F_2,F_2)=\langle R(F_2,F_1)F_2,\eta\rangle=0.
$$
Computing the covariant derivatives of the second fundamental
form, we obtain for $U$
\begin{equation}\label{eq6-pr4}
\begin{array}{c}
aF_1(a)+4H\kappa_1 - a^2\kappa_2 - ab =0,\\
aF_2(a)-4H\kappa_2- a^2\kappa_1=0. \\
\end{array}
\end{equation}
The Gauss equation has the form
$$
K=K_{ext}+\langle R(F_1,F_2)F_2,F_1
\rangle=-3H^2-\frac{1}{4}a^4-\frac{3}{4}b^2+\frac{1}{4}a^2.
$$
From \eqref{eq6-pr2} we obtain
\begin{equation}\label{eq6-pr5}
F_1(\kappa_2)+F_2(\kappa_1)-\left(\kappa_1\right)^2-
\left(\kappa_2\right)^2=-3H^2-\frac{1}{4}a^4-\frac{3}{4}b^2+\frac{1}{4}a^2.
\end{equation}
Using \eqref{eq6-pr4} and the form of $F_2$ and $\eta$, we can
derive
$$
\langle \nabla_{F_1}F_2, \eta \rangle=-\langle F_2,
\nabla_{F_1}\eta \rangle=-\langle F_2, \nabla_{F_1}\left(
\frac{a}{b}F_2+\left( \frac{a^2}{b} + b \right)Z \right) \rangle
$$
$$
=-F_1\left(\frac{a}{b}\right)\langle F_2,F_2 \rangle - F_1\left(
\frac{1}{b}\right)\langle F_2,Z \rangle-\frac{a}{b}\langle F_2,
\nabla_{F_1}F_2 \rangle -\frac{1}{b} \langle F_2,\nabla_{F_1} Z
\rangle
$$
$$
=-F_1\left(\frac{a}{b}\right) + aF_1\left(
\frac{1}{b}\right)-\frac{1}{b} \langle F_2,-\frac{1}{2}T_2
\rangle=-\frac{1}{b}\left( -\frac{4H\kappa_1}{a} + a\kappa_2 + b
\right) +\frac{1}{2}
$$
$$
=-\frac{1}{2}+\frac{4H\kappa_1}{ab}-\frac{a}{b}\kappa_2;
$$
$$
\langle \nabla_{F_2}F_1, \eta \rangle=-\langle F_1,
\nabla_{F_2}\eta \rangle=-\langle F_1, \nabla_{F_2}\left(
\frac{a}{b}F_2+\left( \frac{a^2}{b} + b \right)Z \right) \rangle
$$
$$
=-\frac{a}{b}\langle F_1, \nabla_{F_2}F_2 \rangle -\frac{1}{b}
\langle F_1,\nabla_{F_2} Z \rangle=-\frac{a}{b} \kappa_2
-\frac{1}{b} \langle T_1,\frac{1}{2}bT_1 \rangle=-\frac{a}{b}
\kappa_2-\frac{1}{2}.
$$
In the above equations we used the fact that $Z$ is left invariant and the expressions \eqref{pr1}
for the covariant derivative. Since $ab \neq 0$, the integrability condition
$\langle[F_1,F_2],\eta\rangle=0$ takes the form $H\kappa_1=0$. Besides, \eqref{eq6-pr3} imply
$$
3H=b_{11}=\langle \nabla_{F_1}F_1, \eta \rangle=-\langle F_1,
\nabla_{F_1}\eta \rangle
$$
$$
=-\langle F_1, \nabla_{F_1}\left( \frac{a}{b}F_2+\left(
\frac{a^2}{b} + b \right)Z \right) \rangle=-\frac{a}{b}\langle
F_1, \nabla_{F_1}F_2 \rangle -\frac{1}{b} \langle F_1,\nabla_{F_1}
Z \rangle=\frac{a}{b} \kappa_1.
$$
Thus $H=\kappa_1=0$. In particular, $\nabla_{F_1}F_1=0$, hence $T_1=F_1$ is a geodesic vector field
in the ambient manifold. Note that $T_1$ belongs to the distribution that spans the left invariant
vector fields of $\mathcal{V}$. Considering the set of geodesics in $Nil$ (see \cite[proposition
(3.1), proposition (3.5)]{E1}), we obtain that $T_1=cX+dY$, where $c,d \in \mathbb{R}$ are some
constants, i.e., $T_1=X_1$ and $T_2=X_2$ are left invariant. Note that the second equation of
\eqref{eq6-pr4} implies $F_2(a)=F_2(b)=0$. Thus we obtain
$$
\nabla_{F_2}F_2=\nabla_{F_2}\left( bX_2-aZ
\right)=b\nabla_{bX_2-aZ}X_2-a\nabla_{bX_2-aZ}Z=-abX_1.
$$
Therefore $\kappa_2=-ab$. It follows from this equation, from the computations above in this proof,
and from \eqref{eq6-pr3} that
$$
\frac{1}{2}a^2=b_{12}=\langle \nabla_{F_1}F_2, \eta
\rangle=-\frac{a}{b}\kappa_2 -\frac{1}{2}=a^2-\frac{1}{2},
$$
and $a^2=b^2=\frac{1}{2}$. But then $a=b=\frac{\sqrt{2}}{2}$, and the first equation in
\eqref{eq6-pr4} implies $a\kappa_2+b=0$, which leads to a contradiction.

Thus $ab=0$ at each point of $M$. Since $a^2+b^2=1$ and $a$, $b$
are continuous, $a=1$ or $b=1$ identically. The latter case is
impossible because $Z^{\perp}$ is not integrable; then the normal
vector of $M$ is orthogonal to $\mathcal{Z}$, and $F_2=-Z$.
Therefore $M$ is invariant under the action of $\mathcal{Z}$ by
left translations, and $M$ is formed by integral curves of $Z$,
which are geodesics $(0,0,t)$. Then $M$ has the form \eqref{eq6}.
\end{proof}

Note that similar result for another definition of the Gauss map was obtained in \cite{Sa}. Also, in \cite{Sa} the
equations of the CMC-surfaces of the form \eqref{eq6} were obtained. Proposition \ref{prop3} then implies that the
Gauss maps of all these surfaces are harmonic.

\end{document}